\newtheorem{definition}{Definition}
\newtheorem{assum}{Assumption}
\newtheorem{remark}{Remark}
\newcommand*\bigcdot{\mathpalette\bigcdot@{.5}}
\newcommand*\bigcdot@[2]{\mathbin{\vcenter{\hbox{\scalebox{#2}{$\m@th#1\bullet$}}}}}
\newcommand{\xb}{\mathbf{x}}
\newcommand{\RR}{\mathds{R}}
\newcommand{\dist}{\mathrm{dist}}
\newcommand{\prox}[1]{\mathrm{prox}_{#1}}
\newcommand{\dom}{\mathop{\mathrm{dom}}}
\newcommand{\zero}{\mathbf{0}}
\newcommand{\crit}{\mathop{\mathrm{crit}}}
\newcommand{\argmin}{\mathop{\mathrm{argmin}}}
\newcommand{\KL}{K{\L}~}
\newcommand{\inner}[2]{\langle #1, #2 \rangle}
\newcommand{\red}[1]{#1}
		\renewcommand{\caption}[2][\relax]{% Make a new \caption
			{\raggedright\textbf{\ALG@name~\thealgorithm} ##2\par}%
			\ifx\relax##1\relax % #1 is \relax
			\addcontentsline{loa}{algorithm}{\protect\numberline{\thealgorithm}##2}%
			\else % #1 is not \relax
			\addcontentsline{loa}{algorithm}{\protect\numberline{\thealgorithm}##1}%
			\fi
			\kern2pt\hrule\kern2pt
		}
\title{Proximal Gradient Descent-Ascent: Variable Convergence under \KL Geometry}
\author{Ziyi Chen, Yi Zhou\\ %\thanks{ Use footnote for providing further information about author (webpage, alternative address)---\emph{not} for acknowledgingfunding agencies.  Funding acknowledgements go at the end of the paper.} \\
Department of ECE\\
University of Utah\\
Salt Lake City, UT 84112, USA \\
\texttt{\{u1276972,yi.zhou\}@utah.edu} \\
\And
Tengyu Xu, Yingbin Liang \\
Department of ECE \\
The Ohio State University \\
Columbus, OH 43210, USA \\
\texttt{\{xu.3260,liang.889\}@osu.edu} 
}
\begin{document}

\maketitle

\begin{abstract}
The gradient descent-ascent (GDA) algorithm has been widely applied to solve minimax optimization problems. In order to achieve convergent policy parameters for minimax optimization, it is important that GDA generates convergent variable sequences rather than convergent sequences of function values or gradient norms. However, the variable convergence of GDA has been proved only under convexity geometries, and there lacks understanding for general nonconvex minimax optimization. This paper fills such a gap by studying the convergence of a more general proximal-GDA for regularized nonconvex-strongly-concave minimax optimization. Specifically, we show that proximal-GDA admits a novel Lyapunov function, which monotonically decreases in the minimax optimization process and drives the variable sequence to a critical point. By leveraging this Lyapunov function and the \KL geometry that parameterizes the local geometries of general nonconvex functions, we formally establish the variable convergence of proximal-GDA to a critical point $x^*$, i.e., $x_t\to x^*, y_t\to y^*(x^*)$. Furthermore, over the full spectrum of the K\L-parameterized geometry, we show that proximal-GDA achieves different types of convergence rates ranging from sublinear convergence up to finite-step convergence, depending on the geometry associated with the \KL parameter. This is the first theoretical result on the variable convergence for nonconvex minimax optimization. 
\end{abstract}

\section{Introduction}
Minimax optimization is a classical optimization framework that has been widely applied in various modern machine learning applications, including game theory \cite{ferreira2012minimax}, generative adversarial networks (GANs) \cite{goodfellow2014generative}, adversarial training \cite{Sinha2017CertifyingSD}, reinforcement learning \cite{qiu2020single}, imitation learning \cite{ho2016generative,MultiAgent_Jiaming}, etc. A typical minimax optimization problem is shown below, where $f$ is a differentiable function.
\begin{align}
	\min_{x\in \mathcal{X}}\max_{y\in \mathcal{Y}}~ f(x,y). \nonumber
\end{align} 
A popular algorithm for solving the above minimax problem is gradient descent-ascent (GDA), which performs a gradient descent update on the variable $x$ and a gradient ascent update on the variable $y$ alternatively in each iteration. Under the alternation between descent and ascent updates, it is much desired that GDA generates sequences of variables that {\em converge} to a certain optimal point, i.e., the minimax players obtain convergent optimal policies. 
In the existing literature, many studies have established the convergence of GDA-type algorithms under various global geometries of the objective function, e.g., convex-concave geometry ($f$ is convex in $x$ and concave in $y$) \cite{nedic2009subgradient},  bi-linear geometry \cite{neumann1928theorie,robinson1951iterative} and Polyak-\L ojasiewicz (P\L) geometry \cite{nouiehed2019solving,yang2020global}. Some other work studied GDA under stronger global geometric conditions of $f$ such as convex-strongly-concave geometry \cite{du2019linear} and strongly-convex-strongly-concave geometry \cite{mokhtari2020unified,zhang2020suboptimality}, under which GDA is shown to generate convergent variable sequences. However, these special global function geometries do not hold for modern machine learning problems that usually have complex models and nonconvex geometry. 

Recently, many studies characterized the convergence of GDA in nonconvex minimax optimization, where the objective function is nonconvex in $x$. Specifically, 
 \cite{lin2019gradient,nouiehed2019solving,xu2020unified,boct2020alternating} studied the convergence of GDA in the nonconvex-concave setting and \cite{lin2019gradient,xu2020unified} studied the nonconvex-strongly-concave setting. 
 In these general nonconvex settings, it has been shown that GDA converges to a certain stationary point at a sublinear rate, i.e., \red{$\|G(x_t)\|\le t^{-\alpha}$ for some $\alpha>0$}, where $G(x_t)$ corresponds to a certain notion of gradient. Although such a gradient convergence result implies the stability of the algorithm, namely, $\lim_{t\to \infty} \|x_{t+1} - x_t\|=0$, it does not guarantee the convergence of the variable sequences $\{x_t\}_t, \{y_t\}_t$ generated by GDA. 
 So far, the variable convergence of GDA has not been established for nonconvex problems, but only
under (strongly) convex function geometries that are mentioned previously \cite{du2019linear,mokhtari2020unified,zhang2020suboptimality}. Therefore, we want to ask the following fundamental question:
 \begin{itemize}[leftmargin=*,topsep=0pt]
 	\item {\em Q1: Does GDA have guaranteed {\bf variable convergence} in nonconvex minimax optimization? If so, where do they converge to?}
 \end{itemize}
In fact, proving the variable convergence of GDA in the nonconvex setting is highly nontrivial due to the following reasons: 1) the algorithm alternates between a minimization step and a maximization step; 2) It is well understood that strong global function geometry leads to the convergence of GDA. However, in general nonconvex setting, the objective functions typically do not have an amenable global geometry. Instead, they may satisfy different types of local geometries around the critical points. Hence, it is natural and much desired to exploit the local geometries of functions in analyzing the convergence of GDA. The Kurdyka-\L ojasiewicz (K{\L}) geometry provides a \red{broad} characterization of such local geometries for nonconvex functions.

The Kurdyka-{\L}ojasiewicz (K{\L}) geometry (see \Cref{sec: pre} for details) \cite{Bolte2007,Bolte2014} 
 parameterizes a broad spectrum of the {\em local nonconvex geometries} and has been shown to hold for a broad class of practical functions. Moreover, it also generalizes other global geometries such as strong convexity and P{\L} geometry.
 In the existing literature, the \KL geometry has been exploited extensively to analyze the convergence rate of various gradient-based algorithms in nonconvex optimization, e.g., gradient descent \cite{Attouch2009,Li2017} and its accelerated version \cite{Zhou-ijcai2020} as well as the distributed version \cite{Zhou2016}. 
Hence, we are highly motivated to study the convergence rate of {\bf variable convergence} of GDA in nonconvex minimax optimization under the \KL geometry.   
In particular, we want to address the following question:
\begin{itemize}[leftmargin=*,topsep=0pt]
	\item {\em Q2: How does the local function geometry captured by the \KL parameter affects the variable convergence rate of GDA?}
\end{itemize}
In this paper, we provide comprehensive answers to these questions. We develop a new analysis framework to study the variable convergence of GDA in nonconvex-strongly-concave minimax optimization under the \KL geometry. We also characterize the convergence rates of GDA in the full spectrum of the parameterization of the \KL geometry.

%by importing the Kurdyka-\L ojasiewicz (\KL) property which covers a wide variety of practical nonconvex objective functions (See Section \ref{sec: pre}). 

\subsection{Our contributions}
We consider the following regularized nonconvex-strongly-concave minimax optimization problem %Cite \red{\cite{boct2020alternating}} after being accepted.
\begin{align}
	\min_{x\in \mathbb{R}^m}\max_{y\in \mathcal{Y}}~ f(x,y)+g(x)-h(y), \tag{P}
\end{align}
where $f$ is a differentiable and nonconvex-strongly-concave function, $g$ is a general nonconvex regularizer and $h$ is a convex regularizer. Both $g$ and $h$ can be possibly nonsmooth. To solve the above regularized minimax problem, \red{we study a proximal-GDA algorithm that leverages the forward-backward splitting update \cite{lions1979splitting,attouch2013convergence}}. %we \red{use} a proximal-GDA algorithm that alternates between a proximal gradient descent step and a proximal gradient ascent step in each iteration. %Cite \red{\cite{boct2020alternating}} after being accepted.

%Two specific cases of the problem $\mathrm{(P)}$ have been widely studied, namely the unconstrained case $g=h\equiv 0$ \cite{nemirovski2004prox,daskalakis2018limit,lin2019gradient,xu2020enhanced,yang2020global,xu2020unified,mokhtari2020unified} and the constrained case where $h\equiv 0$, $g(x)=\infty\cdot\mathbbm{1}\{x\in \mathcal{X}\}$ with convex constraint set $\mathcal{X}$ \cite{nedic2009subgradient,jin2019local,zhang2020suboptimality,huang2020accelerated,qiu2020single}. 

We study the variable convergence property of proximal-GDA in solving the minimax problem (P). Specifically, 
we show that proximal-GDA admits a novel Lyapunov function $H(x,y)$ (see \Cref{prop: lyapunov}), which is monotonically decreasing along the trajectory of proximal GDA, i.e., $H(x_{t+1},y_{t+1}) < H(x_t,y_t)$. Based on the monotonicity of this Lyapunov function, we show that every limit point of the variable sequences generated by proximal-GDA is a critical point of the objective function. 

Moreover, by exploiting the ubiquitous \KL geometry of the Lyapunov function, we prove that the entire variable sequence of proximal-GDA has a unique limit point, or equivalently speaking, it converges to a certain critical point $x^*$, i.e., $x_t\to x^*, y_t\to y^*(x^*)$ (see the definition of $y^*$ in Section \ref{sec: pre}). To the best of our knowledge, this is the first variable convergence result of GDA-type algorithms in nonconvex minimax optimization.

Furthermore, we characterize the asymptotic convergence rates of both the variable sequences and the function values of proximal-GDA in different parameterization regimes of the \KL geometry. Depending on the value of the \KL parameter $\theta$, we show that proximal-GDA achieves different types of convergence rates ranging from sublinear convergence up to finite-step convergence, as we summarize  in Table \ref{table: 1} below.

%under the \KL property of the Lyapunov function, we obtain the asymptotic variable convergence rates at which the GDA sequence and its function value converge to a certain critical point and its function value respectively. With different geometric parameter $\theta$ of \KL property, the convergence rates range from finite iteration convergence to sub-linear convergence, as shown in Table \ref{table: 1} and Theorems \ref{thm: 3} \& \ref{thm: 4}. 

\begin{table}[ht]
	\caption{Convergence rates of proximal-GDA under different parameterizations of \KL geometry. Note that $t_0$ denotes a sufficiently large positive integer.}\label{table: 1}
	\center
	\begin{tabular}{ccc}
		\toprule
		{\KL parameter} & {Function value convergence rate} & {Variable convergence rate} \\ \midrule
		$\theta=1$  & Finite-step convergence & Finite-step convergence  \\
		\midrule
		$\theta\in(\frac{1}{2},1)$   &\makecell{$\mathcal{O} \big(\exp\big(-[{2(1-\theta)}]^{t_0-t}\big)\big)$  \\ Super-linear convergence} & \makecell{$\mathcal{O} \big(\exp\big(-[{2(1-\theta)}]^{t_0-t}\big)\big)$ \\ Super-linear convergence}   \\
		\midrule
		$\theta=\frac{1}{2}$  & \makecell{$\mathcal{O}\big(\big(1+\rho\big)^{t_0-t}\big), \rho>0$ \\ Linear convergence} & \makecell{$\mathcal{O}\big(\big(\min\big\{2, 1+\rho\big\}\big)^{(t_0-t)/2}\big), \rho>0$ \\ Linear convergence}  \\
		\midrule
		$\theta\in(0,\frac{1}{2})$  & \makecell{$\mathcal{O}\big((t-t_0)^{-\frac{1}{1-2\theta}}\big)$ \\ Sub-linear convergence}& \makecell{$\mathcal{O}\big((t-t_0)^{-\frac{\theta}{1-2\theta}}\big)$ \\ Sub-linear convergence} \\
		\bottomrule
	\end{tabular}
\end{table}

%\begin{table}[H]
%	\caption{}\label{table: 1}
%	\centering
%	\begin{tabular}{|c|c|}
%		\hline
%		\KL parameter & convergence rate \\ \hline
%		$\theta=1$  &  finite-step \\ \hline
%		$\theta\in(\frac{1}{2},1)$  & super-linear  \\ \hline
%		$\theta=\frac{1}{2}$  &  linear \\ \hline
%		$\theta\in(0,\frac{1}{2})$  & sub-linear $\mathcal{O}(t^{-\frac{\theta}{1-2\theta}})$ \\ \hline
%	\end{tabular}
%\end{table}

%Then, to study the variable and function value convergence in nonconvex strongly concave setting without additional strong assumption like strong convexity and double sided P\L property, we define a Lyapunov function for the GDA algorithm that decreases to a limit along the iteration. From this key observation, we prove that the limit points of the sequence from the GDA algorithm form a compact set of critical points that have the same function value. 

\subsection{Related work}\label{sec: related_work}
\textbf{Deterministic GDA algorithms: } 
\cite{yang2020global} studied an alternating gradient descent-ascent (AGDA) algorithm in which the gradient ascent step uses the current variable $x_{t+1}$ instead of $x_t$. \cite{boct2020alternating} extended the AGDA algorithm to an alternating proximal-GDA (APGDA) algorithm for a regularized minimax optimization. \cite{xu2020unified} studied an alternating gradient projection algorithm which applies $\ell_2$ regularizer to the local objective function of GDA followed by projection onto the constraint sets. \cite{daskalakis2018limit,mokhtari2020unified,zhang2020suboptimality} analyzed optimistic gradient descent-ascent (OGDA) which applies negative momentum to accelerate GDA. \cite{mokhtari2020unified} also studied an extra-gradient algorithm which applies two-step GDA in each iteration. \cite{nouiehed2019solving} studied multi-step GDA where multiple gradient ascent steps are performed, and they also studied the momentum-accelerated version.  \cite{cherukuri2017saddle,daskalakis2018limit,jin2019local} studied GDA in continuous time dynamics using differential equations. \cite{adolphs2019local} analyzed a second-order variant of the GDA algorithm. 

\textbf{Stochastic GDA algorithms: } \cite{lin2019gradient,yang2020global,boct2020alternating} analyzed stochastic GDA, stochastic AGDA and stochastic APGDA, which are direct extensions of GDA, AGDA and APGDA to the stochastic setting respectively. Variance reduction techniques have been applied to stochastic minimax optimization, including SVRG-based \cite{du2019linear,yang2020global}, SPIDER-based \cite{xu2020enhanced}, STORM \cite{qiu2020single} and its gradient free version \cite{huang2020accelerated}. \cite{xielower} studied the complexity lower bound of first-order stochastic algorithms for finite-sum minimax problem.

\textbf{\KL geometry: }The \KL geometry was defined in \cite{Bolte2007}. The \KL geometry has been exploited to study the convergence of various first-order algorithms for solving minimization problems, including gradient descent \cite{Attouch2009},  alternating gradient descent \cite{Bolte2014}, distributed gradient descent \cite{Zhou2016,Zhou_2017a}, accelerated gradient descent \cite{Li2017}. It has also been exploited to study the convergence of second-order algorithms such as Newton's method \cite{Noll2013,Frankel2015} and cubic regularization method \cite{zhou2018convergence}. 

%We also note that \cite{liang2019interaction} obtained a local linear variable convergence of GDA under the conditions that the initial point is close to Nash equilibrium and that the objective funciton is locally strongly convex-strongly concave. These are special cases of our result where we obtain global linear variable convergence that is independent from initialization under the weaker \KL assumption with parameter $\theta=\frac{1}{2}$. 

\section{Problem Formulation and \KL Geometry}\label{sec: pre}
In this section, we introduce the problem formulation, technical assumptions and the Kurdyka-\L ojasiewicz (K\L) geometry.  We consider the following regularized minimax optimization problem. %Cite \red{\cite{boct2020alternating}} after being accepted.
\begin{align}
\min_{x\in \mathbb{R}^m}\max_{y\in \mathcal{Y}}~ f(x,y)+g(x)-h(y), \tag{P}
\end{align}
{where $f: \mathbb{R}^m\times \mathbb{R}^n\to \mathbb{R}$ is a differentiable and nonconvex-strongly-concave loss function,} {$\mathcal{Y}\subset\mathbb{R}^n$ is a compact and convex set, and $g,h$ are the regularizers that are possibly non-smooth.} In particular, define $\Phi(x):=\max_{y\in \mathcal{Y}} f(x,y)-h(y)$, and then the problem $\mathrm{(P)}$ is equivalent to the minimization problem $\min_{x\in \mathbb{R}^m} \Phi(x)+g(x)$. 

Throughout the paper, we adopt the following standard assumptions on the problem (P).
\begin{assum}\label{assum: f}
	The objective function of the problem $\mathrm{(P)}$ satisfies:
	\begin{enumerate}[leftmargin=*,topsep=0pt,noitemsep]
		\item Function $f(\cdot,\cdot)$ is $L$-smooth and function $f(x,\cdot)$ is $\mu$-strongly concave;
		\item Function $(\Phi+g)(x)$ is bounded below, i.e.,  $\inf_{\xb\in \mathbb{R}^m} (\Phi+g)(x) > -\infty$;
		\item For any $\alpha \in \mathbb{R}$, the sub-level set $\{x: (\Phi+g)(x) \le \alpha\}$ is compact; 
		\item Function $h$ is proper and convex, and function $g$ is proper and lower semi-continuous.
	\end{enumerate}
\end{assum} 

To elaborate, \red{item 1 considers the class of nonconvex-strongly-concave functions $f$ that has been widely studied in the existing literature \cite{lin2019gradient,jin2019local,xu2020unified,xu2020enhanced,lu2020hybrid}.} 
Items 2 and 3 \red{guarantee that the minimax problem (P) has at least one solution, and the variable sequences generated by the proximal-GDA algorithm (See \Cref{algo: prox-minimax}) are bounded.}
%Many bivariate loss functions and regularizers satisfy these conditions, including the empirical Wasserstein robustness model \cite{lin2019gradient}, and the regularized bilinear minimax game $f(x,y)=x^{\top}Ay+\frac{\lambda}{2}(\|x\|^2 + \|y\|^2)$, where $\lambda > 0$. For the regularizers,} 
Item 4 requires the regularizer $h$ to be convex (possibly non-smooth), which includes many norm-based popular regularizers such as {$\ell_p$ ($p\ge 1$), elastic net, nuclear norm, spectral norm, etc. On the other hand, the other regularizer $g$ can be nonconvex but lower semi-continuous, which includes all the aforementioned convex regularizers, $\ell_p$ ($0 \le p<1$), Schatten-$p$ norm, rank, etc.} Hence, our formulation of the problem (P) covers a rich class of nonconvex objective functions and regularizers and is more general than the existing nonconvex minimax formulation in \cite{lin2019gradient}, which does not consider any regularizer.  

\begin{remark}
	We note that the strong concavity of $f(x,\cdot)$ in item 1 can be relaxed to concavity, provided that the regularizer $h(y)$ is \red{$\mu$}-strongly convex. In this case, \red{we can add $-\frac{\mu}{2}\|y\|^2$ to both $f(x,y)$ and $h(y)$ such that \Cref{assum: f} still holds.} For simplicity, we will omit the discussion on this case.
\end{remark}

By strong concavity of $f(x,\cdot)$, it is clear that the mapping $y^*(x):=\arg\max _{y\in \mathcal{Y}}f(x,y)-h(y)$ is uniquely defined for every $x\in\mathbb{R}^m$. In particular, if $x^*$ is the desired minimizer of $\Phi(x)$, then $(x^*, y^*(x^*))$ is the desired solution of the minimax problem (P).

%Actually, the items 2 \& 5 can be derived from the items 1, 3 \& 4. See the proof in \Cref{sec_assum1}. Notice that under \Cref{assum: f}, $h$ can be nonsmooth and $g$ can be nonconvex and noncontinuous. 

%Then $(\tilde{x},\tilde{y})\in \mathbb{R}^m\times \mathcal{Y}$ is a solution to the problem $\mathrm{(P)}$ if and only if $\tilde{y}=y^*(\tilde{x})$ and $\tilde{x}$ is a minimizer of $\Phi(x)+g(x)$. 

Next, we present some important properties regarding the function $\Phi(x)$ and the mapping $y^*(x)$. The following proposition from \cite{boct2020alternating} generalizes the Lemma 4.3 of \cite{lin2019gradient} to the regularized setting. The proof can be found in \Cref{sec_prop1proof}. Throughout, we denote $\kappa=L/\mu$ as the condition number and denote $\nabla_1 f(x,y), \nabla_2 f(x,y)$ as the gradients with respect to the first and the second input argument, respectively. For example, with this notation, $\nabla_1 f(x,y^*(x))$ denotes the gradient of $f(x,y^*(x))$ with respect to only the first input argument $x$, and the $x$ in the second input argument $y^*(x)$ is treated as a constant. 

%Under \Cref{assum: f}, the function $\Phi(x)$ and the mapping $y^*(x)$ satisfy the following properties. 

\begin{restatable}[Lipschitz continuity of $y^*(x)$ and $\nabla \Phi(x)$]{proposition}{propphy}\label{prop_Phiystar}
	Let \Cref{assum: f} hold. Then, the mapping $y^*(x)$ and the function $\Phi(x)$ satisfy 
	\begin{enumerate}[leftmargin=*,topsep=0pt,noitemsep]
		\item Mapping $y^*(x)$ is $\kappa$-Lipschitz continuous;
		\item Function $\Phi(x)$ is $L(1+\kappa)$-smooth with $\nabla\Phi(x)=\nabla_1 f(x,y^*(x))$.
	\end{enumerate}
\end{restatable}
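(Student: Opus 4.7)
My plan is to prove the two items in sequence, since item 2 makes essential use of item 1.

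\textbf{Item 1 (Lipschitz continuity of $y^*(x)$).} The plan is to exploit the strong concavity of $f(x,\cdot)-h(\cdot)$ at its maximizer $y^*(x)$. For $i=1,2$, since $y^*(x_i)$ maximizes $f(x_i,\cdot)-h(\cdot)$ on $\mathcal{Y}$ and this map is $\mu$-strongly concave (because $f(x,\cdot)$ is $\mu$-strongly concave and $-h$ is concave), the standard ``strong concavity at an optimum'' inequality gives, for every $y\in\mathcal{Y}$,
\begin{equation*}
f(x_i,y^*(x_i))-h(y^*(x_i))\ \ge\ f(x_i,y)-h(y)+\tfrac{\mu}{2}\|y-y^*(x_i)\|^2.
\end{equation*}
I would instantiate this twice, with $y=y^*(x_2)$ in the $i=1$ inequality and $y=y^*(x_1)$ in the $i=2$ inequality, then add. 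The $h$-terms cancel and I get
\begin{equation*}
\bigl[f(x_1,y^*(x_1))-f(x_2,y^*(x_1))\bigr]-\bigl[f(x_1,y^*(x_2))-f(x_2,y^*(x_2))\bigr]\ \ge\ \mu\,\|y^*(x_1)-y^*(x_2)\|^2.
\end{equation*}
The left-hand side is $\int_0^1\langle\nabla_1 f(x_t,y^*(x_1))-\nabla_1 f(x_t,y^*(x_2)),\,x_1-x_2\rangle\,dt$ with $x_t=x_2+t(x_1-x_2)$, and by $L$-smoothness of $f$ together with Cauchy–Schwarz this is bounded by $L\|y^*(x_1)-y^*(x_2)\|\cdot\|x_1-x_2\|$. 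Cancelling one factor of $\|y^*(x_1)-y^*(x_2)\|$ yields the bound with constant $\kappa=L/\mu$.

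\textbf{Item 2 (smoothness of $\Phi$ and Danskin-type gradient formula).} For the gradient formula, my plan is to invoke Danskin's theorem: $\mathcal{Y}$ is compact, $f(x,y)-h(y)$ is continuous in $(x,y)$ and continuously differentiable in $x$, and strong concavity in $y$ makes the maximizer $y^*(x)$ unique, so Danskin gives $\nabla\Phi(x)=\nabla_1 f(x,y^*(x))$. Smoothness then follows cleanly from item 1:
\begin{align*}
\|\nabla\Phi(x_1)-\nabla\Phi(x_2)\|
&=\|\nabla_1 f(x_1,y^*(x_1))-\nabla_1 f(x_2,y^*(x_2))\|\\
&\le L\,\bigl\|(x_1,y^*(x_1))-(x_2,y^*(x_2))\bigr\|\\
&\le L\bigl(\|x_1-x_2\|+\|y^*(x_1)-y^*(x_2)\|\bigr)\\
&\le L(1+\kappa)\,\|x_1-x_2\|,
\end{align*}
where the first inequality uses $L$-smoothness of $f$ on the product space and the last uses item 1.

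\textbf{Anticipated obstacles.} The computations themselves are short; the only delicate point is justifying the Danskin-type envelope formula in the presence of the possibly nonsmooth convex regularizer $h$ and the constraint $y\in\mathcal{Y}$. The rigorous statement I would appeal to requires: (i) uniqueness of $y^*(x)$, which follows from $\mu$-strong concavity; (ii) continuity of $x\mapsto y^*(x)$, which actually follows from item 1 proved first; and (iii) continuous differentiability of $f$ in $x$ uniformly on $\mathcal{Y}$, which follows from $L$-smoothness and compactness of $\mathcal{Y}$. With these in hand the envelope formula applies to $f(x,y)-h(y)$ because the $h(y^*(x))$ contribution is handled implicitly by the maximizer condition (the subgradient of $h$ at $y^*(x)$ lies in the normal cone relation that kills its contribution to the $x$-derivative). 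I would note this subtlety and cite the referenced Lemma~4.3 of \cite{lin2019gradient} / proposition in \cite{boct2020alternating} that already carries out this extension to the regularized setting.
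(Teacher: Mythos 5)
Your proposal is correct, but item 1 takes a genuinely different route from the paper. The paper works at the first-order level: it writes the variational-inequality optimality conditions for $y^*(x_1)$ and $y^*(x_2)$ (involving subgradients $u_i\in\partial h(y^*(x_i))$), adds them, discards the $h$-terms via monotonicity of $\partial h$, and then invokes strong monotonicity of $-\nabla_2 f(x_1,\cdot)$ plus Lipschitzness of $\nabla_2 f$ in $x$. You instead work at the zeroth-order level: the ``strong concavity at a constrained maximizer'' inequality applied twice makes the $h$-values cancel outright, and the resulting symmetric difference of $f$-values is controlled by writing it as $\int_0^1\langle\nabla_1 f(x_t,y^*(x_1))-\nabla_1 f(x_t,y^*(x_2)),x_1-x_2\rangle\,dt$ and using $L$-smoothness in $y$. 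Both give the same constant $\kappa=L/\mu$; your version avoids handling subgradients of the nonsmooth $h$ entirely (only properness of $h$ is needed for the cancellation, since $h(y^*(x_i))$ is finite), while the paper's version is the one that generalizes most directly to settings where only first-order information is structured. For item 2 your approach coincides with the paper's (Danskin plus the Lipschitz chain giving $L(1+\kappa)$); the one place you are less complete is the justification of the envelope formula for $f(x,y)-h(y)$ with $h$ possibly discontinuous on the boundary of its domain — the paper resolves this by restricting the maximization to the compact sets $A_n=\{y^*(x):\|x\|\le n\}$, proving $h$ is continuous there via Corollary 10.1.1 of Rockafellar, and showing $\Phi=\Phi_n$ on $\|x\|\le n$. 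You correctly identify this as the delicate point and defer to the cited references, which is acceptable at the level of a plan but would need the $A_n$-type argument (or an equivalent) to be self-contained.
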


As an intuitive explanation of \Cref{prop_Phiystar}, since the function $f(x,y)-h(y)$ 
is $L$-smooth with respect to $x$, both the maximizer $y^*(x)$ and the corresponding maximum function value $\Phi(x)$ should not change substantially with regard to a small change of $x$. 
%changes with $x$ in a rate smaller than $L$, i.e. $|[f(x_1, y)-h(y)]-[f(x_2, y)-h(y)]|\le L\|x_1-x_2\|$, which implies that 

Recall that  the minimax problem (P) is equivalent to the standard minimization problem $\min_{x\in \mathbb{R}^m} \Phi(x)+g(x)$, which, according to item 2 of \Cref{prop_Phiystar}, includes a smooth nonconvex function $\Phi(x)$ and a lower semi-continuous regularizer $g(x)$. Hence, we can define the optimization goal of the minimax problem (P) as \textbf{finding a critical point $x^*$ of the nonconvex function $\Phi(x)+g(x)$} that satisfies the necessary optimality condition $\zero \in \partial (\Phi + g)(x^*)$ for minimizing nonconvex functions. Here, $\partial$ denotes the notion of subdifferential as we elaborate below.

%The item 2 of \Cref{prop_Phiystar} is a standard assumption for minimzation of $\Phi(x)+g(x)$, which is equivalent to the minimax problem $\mathrm{(P)}$. 

%Notice that the goal of problem $\mathrm{(P)}$ is to find a solution that is the critical point of $\Phi(x)+g(x)$. Since this problem is nonconvex and lower semi-continuous, we need to introduce the following notion of limiting subdifferential to define its critical point.

\begin{definition}(Subdifferential and critical point, \cite{rockafellar2009variational})\label{def:sub}
	The Frech\'et subdifferential $\widehat\partial h$ of function $h$ at $x\in \dom h$ is the set of $u\in \mathbb{R}^d$ defined as
	\begin{align*}
	\widehat\partial h(x) := \Big\{u: \liminf_{z\neq x, z\to x} \frac{h(z) - h(x) - u^\intercal(z-x)}{\|z-x\|} \ge 0 \Big\},
	\end{align*}
	and the limiting subdifferential $\partial h$ at $x\in\dom h$ is the graphical closure of $\widehat\partial h$ defined as:
	\begin{align*}
	\partial h(x) := \{ u: \exists x_k \to x, h(x_k) \to h(x), \red{u_k \in \widehat{\partial} h(x_k), u_k \to u} \}.
	\end{align*}
	The set of \textbf{critical points} of $h$ is defined as $\mathbf{\crit}\!~h := \{ x: \zero\in\partial h(x) \}$. 
\end{definition}
Throughout, we refer to the limiting subdifferential as subdifferential. We note that  subdifferential is a generalization of gradient (when $h$ is differentiable) and subgradient (when $h$ is convex) to the nonconvex setting. In particular, any local minimizer of $h$ must be a critical point.
 
%so the critical point defined in \Cref{def:sub} also generalizes its usual definition for smooth or convex functions. 

Next, we introduce the Kurdyka-\L ojasiewicz (K\L) geometry of a function $h$.
Throughout, the point-to-set distance is denoted as $\dist_\Omega(x) := \inf_{u \in \Omega} \|x - u\|$.
 
\begin{definition}[\KL geometry, \cite{Bolte2014}]\label{def: KL}
	A proper and lower semi-continuous function $h$ is said to have the \KL geometry if for every compact set $\Omega\subset \mathrm{dom}h$ on which $h$ takes a constant value $h_\Omega \in \RR$, there exist $\varepsilon, \lambda >0$ such that for all $\bar{x} \in \Omega$ and all $x\in \{z\in \mathbb{R}^m : \dist_\Omega(z)<\varepsilon, h_\Omega < h(z) <h_\Omega + \lambda\}$, the following condition holds:
	\begin{align}\label{eq: KL}
	\varphi' \left(h(x) - h_\Omega\right) \cdot \dist_{\partial h(x)}(\zero) \ge 1,
	\end{align}
	where $\varphi'$ is the derivative of function $\varphi: [0,\lambda) \to \RR_+$, which takes the form $\varphi(t) = \frac{c}{\theta} t^\theta$ for certain universal constant $c>0$ and \KL parameter $\theta\in (0,1]$.
\end{definition}

The \KL geometry characterizes the local geometry of a nonconvex function around the set of critical points. To explain, consider the case where $h$ is a differentiable function so that $\partial h(x)=\nabla h(x)$. Then, the \KL inequality in \cref{eq: KL} becomes $h(x)-h_{\Omega} \le \mathcal{O}(\|\nabla h(x)\|^{\frac{1}{1-\theta}})$, which generalizes the Polyak-\L ojasiewicz (PL) condition $h(x)-h_{\Omega} \le \mathcal{O}(\|\nabla h(x)\|^{2})$ \cite{Lojasiewicz1963, Karimi2016} (i.e., \KL parameter $\theta=\frac{1}{2}$). Moreover, the \KL geometry has been shown to hold for a large class of functions including sub-analytic functions, logarithm and exponential functions and semi-algebraic functions. These function classes cover most of the nonconvex objective functions encountered in practical machine learning applications \cite{Zhou2016b,Yue2018,Zhou2017,zhou2018convergence}.  

The \KL geometry has been exploited extensively to analyze the convergence of various {first-order} algorithms, e.g., gradient descent \cite{Attouch2009,Li2017}, alternating minimization \cite{Bolte2014} and distributed gradient methods \cite{Zhou2016}. It has also been exploited to study the {convergence} of second-order algorithms such cubic regularization \cite{zhou2018convergence}. In these works, it has been shown that the variable sequences generated by these algorithms converge to a desired critical point in nonconvex optimization, and the convergence rates critically depend on the parameterization $\theta$ of the \KL geometry. In the subsequent sections, we provide a comprehensive understanding of the convergence and convergence rate of proximal-GDA under the \KL geometry.

\section{Proximal-GDA and Global Convergence Analysis}

In this section, \red{we study the following proximal-GDA algorithm that leverages the forward-backward splitting updates \cite{lions1979splitting,attouch2013convergence}} to solve the regularized minimax problem (P) and analyze its global convergence properties. In particular, the proximal-GDA algorithm is a generalization of the GDA \cite{du2019linear} and projected GDA \cite{nedic2009subgradient} algorithms. The algorithm update rule is specified in \Cref{algo: prox-minimax}, where the two proximal gradient steps are formally defined as %Cite Proximal GDA \red{\cite{boct2020alternating}} after being accepted
\begin{align}
	\prox{\eta_x g}\big(x_t - \eta_{x}\nabla_1 f(x_t,y_t)\big) &:\in \argmin_{u \in \mathbb{R}^m} \Big\{g(u) + \frac{1}{2\eta_{x}} \| u - x_t +\eta_{x} \nabla_1 f(x_t,y_t)\|^2 \Big\} , \\
	\prox{\eta_y h}\big(y_t + \eta_{y}\nabla_2 f(x_t,y_t)\big) &:= \argmin_{v \in \mathcal{Y}} \Big\{h(v) + \frac{1}{2\eta_{y}} \| v - y_t -\eta_{y} \nabla_2 f(x_t,y_t)\|^2 \Big\},
\end{align}

\begin{algorithm}
	\caption{Proximal-GDA}\label{algo: prox-minimax}
	\label{alg: 1}
	{\bf Input:} Initialization $x_0,y_0$, learning rates $\eta_{x}, \eta_y$. \\
	\For{ $t=0,1, 2,\ldots, T-1$}{
		\begin{align*}
		x_{t+1} &\in \prox{\eta_x g}\big(x_t - \eta_{x}\nabla_1 f(x_t,y_t)\big),\\
		y_{t+1} &= \prox{\eta_y h}\big(y_t + \eta_{y}\nabla_2 f(x_t,y_t)\big).
		\end{align*}
	}
	\textbf{Output:} $x_T, y_T$.
\end{algorithm}

Recall that our goal is to obtain a critical point of the minimization problem  $\min_{x\in \mathbb{R}^m} \Phi(x)+g(x)$. Unlike the gradient descent algorithm which generates a sequence of monotonically decreasing function values, the function value $(\Phi+g)(x_k)$ along the variable sequence generated by proximal-GDA is generally oscillating due to the alternation between the gradient descent and gradient ascent steps. Hence, it seems that proximal-GDA is less stable than gradient descent.
However, our next result shows that, for the problem (P), the proximal-GDA admits a special Lyapunov function that monotonically decreases in the optimization process. The proof of \Cref{prop: lyapunov} is in \Cref{sec_prop2proof}. 

\begin{restatable}{proposition}{proplya}\label{prop: lyapunov}
	Let \Cref{assum: f} hold and define the Lyapunov function $H(z):= \Phi(x) + g(x) + {(1-\frac{1}{4\kappa^2})}\|y-y^*(x)\|^2$ with $z:=(x,y)$. Choose the learning rates such that {$\eta_x\le \frac{1}{\kappa^{3}(L+3)^{2}}$}, $\eta_{y}\le \frac{1}{L}$. Then, the variables $z_t=(x_t,y_t)$ generated by proximal-GDA satisfy, for all $t=0,1,2,...$
	\begin{align}
	H(z_{t+1}) \le H(z_t) - 2\|x_{t+1}-x_t\|^2-\frac{1}{4\kappa^2}\big(\|y_{t+1}-y^*(x_{t+1})\|^2+\|y_{t}-y^*(x_{t})\|^2\big). \label{eq: Hdec}
	\end{align}
\end{restatable}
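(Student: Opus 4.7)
The plan is to decompose $H(z_{t+1}) - H(z_t)$ into a descent piece in $(\Phi+g)(x)$ and a change in the weighted tracking error $(1-\tfrac{1}{4\kappa^2})\|y-y^*(x)\|^2$, bound each separately, and then tune the Young's-inequality constants against the stated step sizes.

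For the first piece, I would start from the $L(1+\kappa)$-smoothness of $\Phi$ given by item~2 of Proposition~\ref{prop_Phiystar}:
\begin{align*}
\Phi(x_{t+1}) \le \Phi(x_t) + \langle \nabla_1 f(x_t,y^*(x_t)), x_{t+1}-x_t\rangle + \tfrac{L(1+\kappa)}{2}\|x_{t+1}-x_t\|^2.
\end{align*}
Splitting the inner product as $\langle \nabla_1 f(x_t,y_t), x_{t+1}-x_t\rangle + \langle \nabla_1 f(x_t,y^*(x_t))-\nabla_1 f(x_t,y_t), x_{t+1}-x_t\rangle$ and using the optimality of $x_{t+1}$ against the feasible point $x_t$ in the proximal subproblem gives
\begin{align*}
g(x_{t+1}) + \langle \nabla_1 f(x_t,y_t), x_{t+1}-x_t\rangle + \tfrac{1}{2\eta_x}\|x_{t+1}-x_t\|^2 \le g(x_t).
\end{align*}
The cross term is bounded by $L$-smoothness of $f$ in $y$ together with Young's inequality, yielding at most $\tfrac{L^2}{2c}\|y_t-y^*(x_t)\|^2 + \tfrac{c}{2}\|x_{t+1}-x_t\|^2$ for any $c>0$. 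Adding the three estimates produces a descent for $(\Phi+g)$ of the form $-\bigl(\tfrac{1}{2\eta_x}-\tfrac{c+L(1+\kappa)}{2}\bigr)\|x_{t+1}-x_t\|^2 + \tfrac{L^2}{2c}\|y_t-y^*(x_t)\|^2$.

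For the second piece, the $y$-update is one proximal-gradient-ascent step on the $\mu$-strongly-concave, $L$-smooth objective $f(x_t,\cdot)-h$, whose unique maximizer over $\mathcal{Y}$ is $y^*(x_t)$. A standard contraction estimate for such a step with $\eta_y\le 1/L$ yields $\|y_{t+1}-y^*(x_t)\|^2 \le (1-\eta_y\mu)\|y_t-y^*(x_t)\|^2$. To move the anchor from $y^*(x_t)$ to $y^*(x_{t+1})$, I would apply the elementary inequality
\begin{align*}
\|y_{t+1}-y^*(x_{t+1})\|^2 \le (1+a)\|y_{t+1}-y^*(x_t)\|^2 + \bigl(1+\tfrac{1}{a}\bigr)\|y^*(x_t)-y^*(x_{t+1})\|^2,
\end{align*}
together with the bound $\|y^*(x_t)-y^*(x_{t+1})\|\le \kappa\|x_{t+1}-x_t\|$ from item~1 of Proposition~\ref{prop_Phiystar}, and choose $a$ so that $(1+a)(1-\eta_y\mu)=1-\eta_y\mu/2$. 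This yields $\|y_{t+1}-y^*(x_{t+1})\|^2 \le (1-\eta_y\mu/2)\|y_t-y^*(x_t)\|^2 + C\kappa^2\|x_{t+1}-x_t\|^2$ with an explicit constant $C$.

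Combining the two bounds, weighting the tracking-error estimate by $(1-\tfrac{1}{4\kappa^2})$, and regrouping so that $\|y_{t+1}-y^*(x_{t+1})\|^2$ and $\|y_t-y^*(x_t)\|^2$ each carry the coefficient $-\tfrac{1}{4\kappa^2}$ on the right-hand side leaves exactly the target form. The main obstacle is the constant tuning: one must pick $c=\Theta(\kappa)$ so that the $(1-\tfrac{1}{4\kappa^2})\cdot\eta_y\mu/2$ contraction absorbs $\tfrac{L^2}{2c}\|y_t-y^*(x_t)\|^2$ with the required $\tfrac{1}{4\kappa^2}$-slack, while simultaneously keeping $\tfrac{1}{2\eta_x}$ large enough to dominate $\tfrac{c+L(1+\kappa)}{2}$ together with the Lipschitz-of-$y^*$ penalty $(1-\tfrac{1}{4\kappa^2})C\kappa^2$ and still leave a residual of $2\|x_{t+1}-x_t\|^2$. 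This is precisely where the stringent bound $\eta_x\le \tfrac{1}{\kappa^3(L+3)^2}$ is needed; once it is in force the stated inequality follows by direct rearrangement.
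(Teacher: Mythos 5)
Your proposal is correct and follows essentially the same route as the paper's proof: smoothness of $\Phi$ plus the prox-optimality of $x_{t+1}$, Young's inequality on the cross term $\langle \nabla_1 f(x_t,y^*(x_t))-\nabla_1 f(x_t,y_t), x_{t+1}-x_t\rangle$, the one-step contraction of the proximal ascent update toward $y^*(x_t)$, the $(1+a)$-anchor shift via the $\kappa$-Lipschitzness of $y^*$, and a final regrouping into $H$. The only differences are in bookkeeping — the paper fixes $c=L^2\kappa^2$ (giving the $\tfrac{1}{2\kappa^2}\|y_t-y^*(x_t)\|^2$ term) and $a=\kappa^{-1}$ (giving the $(1-\kappa^{-2})$ contraction) rather than leaving them generic — so the two arguments are the same proof with different constant choices.
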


\red{ We first explain how this Lyapunov function is introduced in the proof. By \cref{eq: 3} in the supplementary material, we established a recursive inequality on the objective function $(\Phi + g)(x_{t+1})$. One can see that the right hand side of \cref{eq: 3} contains a negative term $-\|x_{t+1} - x_t\|^2$ and an undesired positive term $\|y^*(x_t) - y_t\|^2$. Hence, the objective function $(\Phi + g)(x_{t+1})$ may be oscillating and cannot serve as a proper Lyapunov function. In the subsequent analysis, we break this positive term into a difference of two terms $\|y^*(x_t) - y_t\|^2 - \|y^*(x_{t+1}) - y_{t+1}\|^2$, by leveraging the update of $y_{t+1}$ for solving the strongly concave maximization problem. After proper rearranging, this difference term contributes to the quadratic term in the Lyapunov function.}

We note that the Lyapunov function $H(z)$ is the objective function $\Phi(x) + g(x)$ regularized by the additional quadratic term $(1-\frac{1}{4\kappa^2})\|y-y^*(x)\|^2$, and such a Lyapunov function clearly characterizes our optimization goal. To elaborate, consider a desired case where the sequence $x_t$ converges to a certain critical point $x^*$ and the sequence $y_t$ converges to the corresponding point $y^*(x^*)$. In this case, it can be seen that the Lyapunov function $H(z_t)$ converges to the desired function value $(\Phi+ g)(x^*)$. Hence, solving the minimax problem (P) is equivalent to minimizing the Lyapunov function. More importantly, \Cref{prop: lyapunov} shows that the Lyapunov function value sequence $\{H(z_t)\}_t$ is monotonically decreasing in the optimization process of proximal-GDA, implying that the algorithm continuously makes optimization progress. We also note that the coefficient $(1-\frac{1}{4\kappa^2})$ in the Lyapunov function is  chosen in a way so that \cref{eq: Hdec} can be proven to be strictly decreasing. This monotonic property is the core of our analysis of proximal-GDA.

% Lyapunov function, the coefficient $1-\frac{1}{4\kappa^2}>0$ is carefully selected such that in eq. \eqref{eq: Hdec}, the coefficients of $\|x_{t+1} - x_t \|^2$, $\|y_{t}-y^*(x_{t})\|^2$ and $\|y_{t+1}-y^*(x_{t+1})\|^2$ are strictly negative, so all of these terms can be upper bounded. In this way, \Cref{prop: lyapunov} implies the following desirable asymptotic properties of the sequences. 

%Notice that the minimax problem $\mathrm{(P)}$ is also equivalent to the minimization of the Lyapunove function $H(z)$, \Cref{prop: lyapunov} implies that $H(z_t)$ is non-increasing, which also implies that the solution is constantly improving with the iteration. In the definition of the Lyapunov function, the coefficient $1-\frac{1}{4\kappa^2}>0$ is carefully selected such that in eq. \eqref{eq: Hdec}, the coefficients of $\|x_{t+1} - x_t \|^2$, $\|y_{t}-y^*(x_{t})\|^2$ and $\|y_{t+1}-y^*(x_{t+1})\|^2$ are strictly negative, so all of these terms can be upper bounded. In this way, \Cref{prop: lyapunov} implies the following desirable asymptotic properties of the sequences. 

Based on \Cref{prop: lyapunov}, we obtain the following asymptotic properties of the variable sequences generated by proximal-GDA. The proof can be found in \Cref{sec_coro1proof}.

\begin{restatable}{coro}{corolya}\label{coro: 1}
Based on \Cref{prop: lyapunov}, the sequences $\{x_t,y_t\}_t$ generated by proximal-GDA satisfy
\begin{align} % Looks like an important theorem. Change to theorem? 
	\lim_{t\to \infty} \|x_{t+1} - x_t \| = 0, ~~\lim_{t\to \infty} \|y_{t+1} - y_t \| = 0, ~~\lim_{t\to \infty} \|y_{t} - y^*(x_t) \| = 0. \nonumber
\end{align}
\end{restatable}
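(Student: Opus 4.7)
The plan is to use Proposition~\ref{prop: lyapunov} as the main engine: the inequality \eqref{eq: Hdec} is a standard ``sufficient descent'' statement on the Lyapunov function $H$, so the strategy is to telescope it and extract the summability of the three error terms.

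First I would verify that $H$ is bounded below along the trajectory. Since $\kappa = L/\mu \geq 1$, the coefficient $(1 - \tfrac{1}{4\kappa^2}) \geq \tfrac{3}{4} > 0$, so the quadratic term $(1-\tfrac{1}{4\kappa^2})\|y - y^*(x)\|^2$ is nonnegative. Combined with Assumption~\ref{assum: f} item~2, which gives $\inf_x (\Phi+g)(x) > -\infty$, we conclude $\inf_z H(z) > -\infty$. Combined with the monotonicity granted by \eqref{eq: Hdec}, the sequence $\{H(z_t)\}_t$ is monotonically decreasing and bounded below, hence convergent.

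Next I would telescope \eqref{eq: Hdec} from $t=0$ to $t=T-1$ and let $T \to \infty$, which yields
\begin{align*}
\sum_{t=0}^{\infty}\!\Big( 2\|x_{t+1}-x_t\|^2 + \tfrac{1}{4\kappa^2}\|y_{t+1}-y^*(x_{t+1})\|^2 + \tfrac{1}{4\kappa^2}\|y_{t}-y^*(x_{t})\|^2 \Big) \le H(z_0) - \inf_z H(z) < \infty.
\end{align*}
Since each term in the sum is nonnegative and the sums converge, each summand must tend to $0$. This immediately gives $\|x_{t+1}-x_t\| \to 0$ and $\|y_t - y^*(x_t)\| \to 0$, which are two of the three claimed limits.

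For the remaining claim $\|y_{t+1} - y_t\| \to 0$, I would use the triangle inequality together with the Lipschitz continuity of $y^*(\cdot)$ from Proposition~\ref{prop_Phiystar} item~1:
\begin{align*}
\|y_{t+1}-y_t\| \le \|y_{t+1}-y^*(x_{t+1})\| + \|y^*(x_{t+1})-y^*(x_t)\| + \|y^*(x_t)-y_t\| \le \|y_{t+1}-y^*(x_{t+1})\| + \kappa\|x_{t+1}-x_t\| + \|y^*(x_t)-y_t\|.
\end{align*}
All three terms on the right-hand side tend to $0$ by the previous step, so $\|y_{t+1}-y_t\| \to 0$. The argument is entirely elementary given Proposition~\ref{prop: lyapunov} and Proposition~\ref{prop_Phiystar}; the only mildly delicate point is ensuring the sign/coefficient of the Lyapunov function keeps it bounded below, which is why I would explicitly flag the check $\kappa \ge 1$ to justify nonnegativity of the quadratic regularizer.
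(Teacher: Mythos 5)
Your proposal is correct and follows essentially the same route as the paper: telescope the descent inequality \eqref{eq: Hdec}, use the lower bound on $H$ inherited from $\inf_x(\Phi+g)(x)>-\infty$ to get summability of the error terms, and conclude each term vanishes. The only cosmetic difference is in the last limit, where you bound $\|y_{t+1}-y_t\|$ via a three-term triangle inequality through $y^*(x_{t+1})$ and the $\kappa$-Lipschitzness of $y^*$, whereas the paper goes through $y^*(x_t)$ and the contraction estimate \eqref{eq: 8}; both are valid.
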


The above result shows that the variable sequences generated by proximal-GDA in solving the problem (P) are asymptotically stable.  In particular, the last two equations show that $y_t$ asymptotically approaches the corresponding maximizer $y^*(x_t)$ of the objective function $f(x_t,y)+g(x_t)-h(y)$. Hence, if $x_t$ converges to a certain critical point, $y_t$ will converge to the corresponding maximizer.

\textbf{Discussion:} We note that the monotonicity property in \Cref{prop: lyapunov} further implies the convergence rate result $\min_{0\le k\le t} \|x_{k+1} - x_k \| \le \mathcal{O}(t^{-1/2})$ (by telescoping over $t$). When there is no regularizer, this convergence rate result can be shown to further imply that $\min_{0\le k\le t}\|\nabla \Phi(x_k)\| \le \mathcal{O}(t^{-1/2})$, which reduces to the Theorem 4.4 of \cite{lin2019gradient}. However, such a convergence rate result does not imply the convergence of the variable sequences $\{x_t\}_t, \{y_t\}_t$. To explain, we can apply the convergence rate result $\|x_{t+1} - x_t \|\le \mathcal{O}(t^{-1/2})$ to bound the trajectory norm as $\|x_{T}\|\le \|x_0\|+\sum_{t=0}^{T-1}\|x_{t+1} - x_t \| \approx \sqrt{T}$, which diverges to $+\infty$ as $T\to \infty$. Therefore, such a type of convergence rate does not even imply the boundedness of the trajectory. In this paper, our focus is to establish the convergence of the variable sequences generated by proximal-GDA.

All the results in \Cref{coro: 1} imply that the alternating proximal gradient descent \& ascent updates of proximal-GDA can achieve stationary points, which we show below to be critical points.

%Notice that for the sequences $x_t, y_t$ generated by the PGDA algorithm, $(x_t-x_{t+1})/\eta_x$ and $(y_{t+1}-y_t)/\eta_y$ can be defined as the proximal gradient mappings of $f(x,y)+g(x)-h(y)$ with regards to $x$ and $y$ respectively. Proximal gradient mapping is a generalized notion of gradient mapping to regularized function that has been used in analyzing regularized optimization \cite{??}. Hence, the first two equalities of \Cref{coro: 1} implies that the proximal gradients of $f(x,y)+g(x)-h(y)$ at $(x_t,y_t)$ vanishes with iteration, that is, $(x_t, y_t)$ is asymptotically a critical point of $f(x,y)+g(x)-h(y)$. This conclusion is also reached in many other works on minimax problem \cite{??}. Furthermore, since $f(x,y)+g(x)-h(y)$ is strongly convex with regards to $y$, the asymptotically critical point $y_t$ is also an asymptotical maximizer of $f(x,y)+g(x)-h(y)$ at $x=x_t$, as indicated by the last equality of \Cref{coro: 1}. Hence, $f(x_t,y_t)+g(x_t)-h(y_t)\approx (\Phi+g)(x_t)$ for sufficiently large $t$. Therefore, based on \Cref{prop: lyapunov} and \Cref{coro: 1}, we are also able to prove the following global convergence result of PGDA with regards to the function $\Phi+g$.

\begin{restatable}[Global convergence]{thm}{thmconv}\label{thm: 1}
Let \Cref{assum: f} hold and choose the learning rates {$\eta_x\le \frac{1}{\kappa^{3}(L+3)^{2}}$}, $\eta_{y}\le \frac{1}{L}$. Then, proximal-GDA satisfies the following properties.
\begin{enumerate}[leftmargin=*,topsep=0pt,noitemsep]
	\item The function value sequence $\{(\Phi+g)(x_t)\}_t$ converges to a finite limit $H^*>-\infty$; 
	\item The sequences $\{x_t\}_t, \{y_t\}_t$ are bounded and have compact sets of limit points. Moreover, $(\Phi+g)(x^*) \equiv H^*$ for any limit point $x^*$ of $\{x_t\}_t$;
	\item Every limit point of $\{x_t\}_t$ is a critical point of $(\Phi+g)(x)$.
\end{enumerate}
\end{restatable}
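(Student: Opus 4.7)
The plan is to derive the three claims in order, using the Lyapunov decrease from \Cref{prop: lyapunov}, the asymptotic stability from \Cref{coro: 1}, and the first-order optimality condition of the proximal updates.

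For item 1, the monotone decrease of $H(z_t)$ together with the uniform lower bound $H(z_t) \ge (\Phi+g)(x_t) \ge \inf(\Phi+g) > -\infty$ from \Cref{assum: f} shows that $\{H(z_t)\}_t$ converges to some finite $H^*$. Since $\|y_t - y^*(x_t)\| \to 0$ by \Cref{coro: 1}, the quadratic term in $H$ vanishes asymptotically, so $(\Phi+g)(x_t) \to H^*$ as well.

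For item 2, the inequality $(\Phi+g)(x_t) \le H(z_0)$ confines $\{x_t\}$ to a sub-level set that is compact by \Cref{assum: f}, giving boundedness of $\{x_t\}$; boundedness of $\{y_t\}$ is immediate from $y_t \in \mathcal{Y}$. Their limit sets are therefore nonempty, closed and bounded, hence compact. The subtle point is to promote this to $(\Phi+g)(x^*) = H^*$ for every limit point $x^*$: pick a subsequence $x_{t_k} \to x^*$ (then $x_{t_k+1} \to x^*$ as well, by \Cref{coro: 1}). Lower semicontinuity of $g$ gives $g(x^*) \le \liminf_k g(x_{t_k+1})$, while testing the variational inequality defining $x_{t_k+1}$ against the trial point $u = x^*$ and letting $k \to \infty$ (the two quadratic remainders cancel in the limit because $x_{t_k}, x_{t_k+1}$ both tend to $x^*$) produces the matching reverse bound $\limsup_k g(x_{t_k+1}) \le g(x^*)$. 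Combined with continuity of $\Phi$, this forces $(\Phi+g)(x_{t_k+1}) \to (\Phi+g)(x^*)$, which must coincide with $H^*$ by item 1.

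For item 3, Fermat's rule applied to the proximal update for $x_{t+1}$ produces an explicit Fr\'echet subgradient:
\begin{align*}
u_t := \frac{1}{\eta_x}(x_t - x_{t+1}) + \nabla\Phi(x_{t+1}) - \nabla_1 f(x_t,y_t) \;\in\; \nabla\Phi(x_{t+1}) + \widehat{\partial} g(x_{t+1}) \;=\; \widehat{\partial}(\Phi+g)(x_{t+1}),
\end{align*}
where the sum rule is valid because $\Phi$ is smooth by \Cref{prop_Phiystar}. Using $\nabla\Phi(x_{t+1}) = \nabla_1 f(x_{t+1}, y^*(x_{t+1}))$, the $L$-Lipschitz continuity of $\nabla f$, and the $\kappa$-Lipschitz continuity of $y^*$, one bounds $\|u_t\|$ by a constant multiple of $\|x_{t+1} - x_t\| + \|y_t - y^*(x_t)\|$, which vanishes by \Cref{coro: 1}. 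Along the subsequence $\{t_k\}$ from item 2 we then have $x_{t_k+1} \to x^*$, $(\Phi+g)(x_{t_k+1}) \to (\Phi+g)(x^*)$, and $u_{t_k} \to 0$, so the closure-of-$\widehat{\partial}$ property in \Cref{def:sub} yields $0 \in \partial(\Phi+g)(x^*)$, i.e.\ $x^*$ is a critical point.

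The main obstacle I anticipate is the lack of continuity of $g$: both the identity $(\Phi+g)(x^*) = H^*$ in item 2 and the function-value convergence $(\Phi+g)(x_{t_k+1}) \to (\Phi+g)(x^*)$ required to invoke the closure of the limiting subdifferential in item 3 rely on extracting a matching reverse inequality from the proximal variational inequality, since no pointwise continuity of $g$ is available from \Cref{assum: f}.
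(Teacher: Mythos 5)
Your proposal is correct and follows essentially the same route as the paper: monotone decrease of the Lyapunov function plus the vanishing of $\|y_t-y^*(x_t)\|$ for item 1, the proximal variational inequality tested at the limit point combined with lower semicontinuity of $g$ to get $(\Phi+g)(x^*)=H^*$ for item 2, and the explicit Fr\'echet subgradient from Fermat's rule (vanishing by \Cref{coro: 1}) together with the graphical-closure definition of $\partial$ for item 3. The only cosmetic difference is that you invoke compactness of the sub-level sets of $\Phi+g$ directly for boundedness, whereas the paper first establishes compactness of the sub-level sets of $H$; both are valid.
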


The proof of \Cref{thm: 1} is presented in \Cref{sec_thm1proof}. The above theorem establishes the global convergence property of proximal-GDA. Specifically, item 1 shows that the function value sequence $\{(\Phi+g)(x_t)\}_t$ converges to a finite limit $H^*$, which is also the limit of the Lyapunov function sequence $\{H(z_t)\}_t$. Moreover, items 2 \& 3 further show that all the converging subsequences of $\{x_t\}_t$ converge to critical points of the problem, at which the function $\Phi+g$ achieves the constant value $H^*$. These results show that proximal-GDA can properly find critical points of the minimax problem (P). Furthermore, based on these results, the variable sequences generated by proximal-GDA are guaranteed to enter a local parameter region where the Kurdyka-\L ojasiewicz geometry holds, which we exploit in the next section to establish stronger convergence results of the algorithm.

%The above theorem shows the properties of the limit points of the trajectory sequences, which goes beyond the popular (proximal) convergence results about (proximal) gradient of nonconvex-strongly convex minimax problem \cite{??}. However, these sequences may still possess multiple limit points and may not converge. Our next result shows that, under the \KL geometry of the Lyapunov function $H(z)$, it is guaranteed to have a unique limit point. Throughout, we assume that $y^*(x)$ is a differentiable mapping. 

\section{Variable Convergence of Proximal-GDA under \KL Geometry}
We note that \Cref{thm: 1} only shows that every limit point of $\{x_t\}_t$ is a critical point, and the sequences $\{x_t,y_t\}_t$ may not necessarily be convergent. In this section, we exploit the local \KL geometry of the Lyapunov function to formally prove the convergence of these sequences. Throughout this section, \red{we adopt the following assumption.}

\begin{assum}\label{assum: H}
	\red{Regarding the mapping $y^*(x)$, the function $\|y^*(x)-y\|^2$ has a non-empty subdifferential, i.e., $\partial_x(\|y^*(x)-y\|^2)\ne \emptyset$}.
\end{assum}

\red{Note that in many practical scenarios $y^*(x)$ is sub-differentiable. In addition, \Cref{assum: H} ensures the sub-differentiability of the Lyapunov function $H(z):= \Phi(x) + g(x) + {(1-\frac{1}{4\kappa^2})}\|y-y^*(x)\|^2$. } We obtain the following variable convergence result of proximal-GDA under the \KL geometry. The proof is presented in \Cref{sec_thm2proof}.
\begin{restatable}[Variable convergence]{thm}{thmvarconv}\label{thm: 2}
	Let \Cref{assum: f} \red{\& \ref{assum: H}} hold and assume that $H$ has the \KL geometry. Choose the learning rates {$\eta_x\le \frac{1}{\kappa^{3}(L+3)^{2}}$} and $ \eta_{y}\le \frac{1}{L}$. Then, the sequence $\{(x_t, y_t)\}_t$ generated by proximal-GDA converges to a certain critical point $(x^*, y^*(x^*))$ of $(\Phi+g)(x)$, i.e., 
	\begin{align}
		x_t \overset{t}{\to} x^*, \quad y_t \overset{t}{\to} y^*(x^*). \nonumber
	\end{align}
\end{restatable}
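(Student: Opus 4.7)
The plan is to follow the standard Kurdyka--\L ojasiewicz-based convergence template (in the spirit of \cite{Attouch2009,Bolte2014}) applied to the Lyapunov function $H$ from \Cref{prop: lyapunov}. The scheme rests on three ingredients: a sufficient decrease inequality (already provided by \Cref{prop: lyapunov}), a relative error bound of the form $\dist_{\partial H(z_{t+1})}(\zero) \le C(\|x_{t+1}-x_t\| + \|y_{t+1}-y^*(x_{t+1})\| + \|y_t-y^*(x_t)\|)$, and the \KL inequality on a uniformized neighborhood of the limit set. These combine to give summability of $\sum_t \|x_{t+1}-x_t\|$, hence Cauchy-ness of $\{x_t\}$.

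First I would invoke \Cref{prop: lyapunov} and \Cref{thm: 1} to conclude that $\{H(z_t)\}$ is nonincreasing with finite limit $H^*$, and that the set $\Omega$ of limit points of $\{z_t\}$ is compact with $H\equiv H^*$ on $\Omega$. If $H(z_{t_0}) = H^*$ for some finite $t_0$, then the strict descent in \cref{eq: Hdec} forces $x_t\equiv x_{t_0}$ and $y_t = y^*(x_t)$ for all $t\ge t_0$, so convergence is immediate. So assume $H(z_t) > H^*$ for all $t$.

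Second I would derive the relative error bound. Using \Cref{assum: H} to pick an element of $\partial_x \|y_{t+1}-y^*(x_{t+1})\|^2$, the decomposition $\partial H(z) = (\nabla\Phi(x)+\partial g(x)+(1-\tfrac{1}{4\kappa^2})\partial_x\|y-y^*(x)\|^2)\times\{2(1-\tfrac{1}{4\kappa^2})(y-y^*(x))\}$ together with the optimality conditions for the proximal updates (namely $-\eta_x^{-1}(x_{t+1}-x_t)-\nabla_1 f(x_t,y_t) \in \partial g(x_{t+1})$ and the analogous one for $y_{t+1}$) produces an element $w_{t+1}\in\partial H(z_{t+1})$. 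Its norm can be controlled using $\nabla\Phi(x) = \nabla_1 f(x,y^*(x))$ from \Cref{prop_Phiystar}, the $L$-Lipschitz continuity of $\nabla_1 f$, and the $\kappa$-Lipschitz continuity of $y^*$, yielding the desired bound.

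Third, since $\Omega$ is compact and $H\equiv H^*$ on it, the \emph{uniformized \KL inequality} (a standard consequence of \Cref{def: KL}) gives, for all $t$ sufficiently large, $\varphi'(H(z_t)-H^*)\cdot \dist_{\partial H(z_t)}(\zero)\ge 1$. Combined with the sufficient decrease \cref{eq: Hdec}, the concavity of $\varphi$, and the relative error bound, I would obtain the standard inequality
\[
\varphi(H(z_t)-H^*) - \varphi(H(z_{t+1})-H^*) \ge \frac{c\,(\|x_{t+1}-x_t\|^2 + \|y_{t+1}-y^*(x_{t+1})\|^2 + \|y_t-y^*(x_t)\|^2)}{\|w_t\|}.
\]
Applying $ab\le \tfrac{1}{2\lambda}a^2 + \tfrac{\lambda}{2}b^2$ with $a = \dist_{\partial H(z_t)}(\zero)$ and $b = \|x_{t+1}-x_t\| + \|y_{t+1}-y^*(x_{t+1})\| + \|y_t-y^*(x_t)\|$ converts the quadratic descent on the right-hand side into linear increments. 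Telescoping then yields $\sum_t \|x_{t+1}-x_t\|<\infty$ and $\sum_t \|y_t-y^*(x_t)\|<\infty$. The former implies $\{x_t\}$ is Cauchy, so $x_t\to x^*$; the $\kappa$-Lipschitz continuity of $y^*$ from \Cref{prop_Phiystar} and the summability of $\|y_t-y^*(x_t)\|$ then give $y_t\to y^*(x^*)$. Criticality of $x^*$ follows from \Cref{thm: 1}.

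The main obstacle is the relative error bound in step two: $\partial_x \|y-y^*(x)\|^2$ is a set-valued object since $y^*$ is generally nonsmooth, so \Cref{assum: H} is needed to guarantee its nonemptyness and one must carefully select an element whose norm is controllable (via Lipschitzness of $y^*$) by $\|y-y^*(x)\|$. The subsequent bookkeeping to convert the quadratic descent into summable linear increments via the concavity of $\varphi$ is routine but must be done carefully to ensure the telescoping absorbs the cross-term $\|y_t-y^*(x_t)\|$ appearing on the right-hand side of \cref{eq: Hdec}.
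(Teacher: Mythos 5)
Your proposal is correct and follows essentially the same route as the paper: the sufficient-decrease property of $H$ from \Cref{prop: lyapunov}, a relative error bound on $\dist_{\partial H(z_{t+1})}(\zero)$ obtained from the proximal optimality condition and the Lipschitz continuity of $y^*$ (with \Cref{assum: H} handling the set-valued term $\partial_x\|y-y^*(x)\|^2$), the \KL inequality on the limit set where $H\equiv H^*$, and the concavity of $\varphi$ to convert quadratic descent into summable increments and conclude that $\{x_t\}$ is Cauchy. The only cosmetic difference is that the $y$-component of $\partial H$ is just the explicit gradient of the quadratic term, so no optimality condition for the $y$-update is needed there; otherwise the bookkeeping you outline matches the paper's argument.
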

\Cref{thm: 2} formally shows that proximal-GDA is guaranteed to converge to a certain critical point $(x^*, y^*(x^*))$ of the minimax problem (P), provided that the Lyapunov function belongs to the large class of \KL functions. 
To the best of our knowledge, this is the first variable convergence result of GDA-type algorithms in nonconvex minimax optimization. 
The proof logic of \Cref{thm: 2} can be summarized as the following two key steps. 

 \textbf{Step 1:} By leveraging the monotonicity property of the Lyapunov function in \Cref{prop: lyapunov}, we first show that the variable sequences of proximal-GDA eventually enter a local region where the \KL geometry holds; 

 \textbf{Step 2:} Then, combining the \KL inequality in \cref{eq: KL} and the monotonicity property of the Lyapunov function in \cref{eq: Hdec}, we show that the variable sequences of proximal-GDA are Cauchy sequences and hence converge to a certain critical point.

\section{Convergence Rate of Proximal-GDA under \KL Geometry}
In this section, we exploit the parameterization of the \KL geometry to establish various types of asymptotic convergence rates of proximal-GDA.

We obtain the following asymptotic convergence rates of proximal-GDA under different parameter regimes of the \KL geometry. The proof is presented in \Cref{sec_thm3proof}. In the sequel, we denote $t_0$ as a sufficiently large positive integer, denote $c>0$ as the constant in \Cref{def: KL} and also define
\begin{align}
	M:=\max\Big\{ \frac{1}{2}\Big(\frac{1}{\eta_x} +(L+4\kappa^2)(1+\kappa)\Big)^2, 4\kappa^2(L+4\kappa)^2\Big\}. \label{eq: M}
\end{align}
%Without loss of generality, we assume that $\Phi+g$ is non-negative, which can always be satisfied by shifting the function.

\begin{restatable}[{Funtion value} convergence rate]{thm}{thmconvrate}\label{thm: 3}
	Under the same conditions as those of \Cref{thm: 2}, the Lyapunov function value sequence $\{H(z_t)\}_t$ converges to the limit $H^*$ at the following rates.
	\begin{enumerate}[leftmargin=*,topsep=0pt,noitemsep]
		\item If \KL geometry holds with $\theta=1$, then $H(z_t)\downarrow H^*$ within {\bf finite number of iterations};
		\item If \KL geometry holds with $\theta\in (\frac{1}{2}, 1)$, then $H(z_t)\downarrow H^*$ {\bf super-linearly} as
		\begin{align}
			H(z_t)-H^*\le (2Mc^2)^{-\frac{1}{2\theta-1}} \exp\Big(-\Big(\frac{1}{2(1-\theta)}\Big)^{t-t_0}\Big),  \quad \forall t\ge t_0;\label{eq: HsuperConverge}
			%(2Mc^2)^{-\frac{1}{2\theta-1}} \exp\Big\{-\Big[\frac{1}{2(1-\theta)}\Big]^{t-t_0}\Big\},  \quad \forall t\ge t_0;\label{eq: HsuperConverge}
		\end{align}
		\item If \KL geometry holds with $\theta=\frac{1}{2}$, then $H(z_t)\downarrow H^*$ {\bf linearly} as
		\begin{align}
			H(z_t) - H^* \le \Big(1+\frac{1}{2Mc^2}\Big)^{t_0-t}(H(z_{t_0})-H^*), \quad \forall t\ge t_0; \label{eq: HlinearConverge}
		\end{align}
		\item If \KL geometry holds with $\theta\in (0, \frac{1}{2})$, then $H(z_t)\downarrow H^*$ {\bf sub-linearly} as
		\begin{align}
			H(z_t) - H^* \le \big[\red{C}(t-t_0)\big]^{-\frac{1}{1-2\theta}}, \quad \forall t\ge t_0. \label{eq: HsubConverge}
		\end{align}
		where $\red{C= \min\Big[ \frac{1-2\theta}{8Mc^2}, d_{t_0}^{-(1-2\theta)}\big(1-2^{-(1-2\theta)}\big)\Big]>0}$.
	\end{enumerate}
\end{restatable}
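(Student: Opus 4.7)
The plan is to reduce \Cref{thm: 3} to the single scalar recursion
\begin{equation*}
r_{t+1}^{2(1-\theta)} \le 2Mc^2\bigl(r_t - r_{t+1}\bigr), \qquad t \ge t_0,
\end{equation*}
where $r_t := H(z_t) - H^*$, and then analyze this recursion separately in each of the four regimes of the \KL exponent $\theta$. The index $t_0$ is chosen via \Cref{thm: 2}: once $t \ge t_0$, the iterates $z_t$ lie in a compact neighborhood of the limit $(x^*, y^*(x^*))$ on which the \KL inequality $\dist_{\partial H(z_t)}(\zero) \ge c^{-1} r_t^{1-\theta}$ holds, and $r_t$ is already smaller than any prescribed constant.

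The heart of the argument is constructing a subgradient of $H$ at $z_{t+1}$ of controlled norm. I would read off from the proximal update for $x_{t+1}$ the inclusion $-\eta_x^{-1}(x_{t+1}-x_t) - \nabla_1 f(x_t,y_t) \in \partial g(x_{t+1})$; adding $\nabla\Phi(x_{t+1}) = \nabla_1 f(x_{t+1}, y^*(x_{t+1}))$ from \Cref{prop_Phiystar} and an element of $(1-\frac{1}{4\kappa^2})\partial_x\|y_{t+1}-y^*(x_{t+1})\|^2$ (well-defined by \Cref{assum: H}) supplies the $x$-block of an element of $\partial H(z_{t+1})$, while the $y$-block is the explicit vector $(2-\frac{1}{2\kappa^2})(y_{t+1}-y^*(x_{t+1}))$. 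Bounding the $x$-block via $L$-smoothness of $f$ and $\kappa$-Lipschitzness of $y^*$ and applying Cauchy-Schwarz yields
\begin{equation*}
\dist_{\partial H(z_{t+1})}(\zero)^2 \le M\bigl(\|x_{t+1}-x_t\|^2 + \|y_t-y^*(x_t)\|^2 + \|y_{t+1}-y^*(x_{t+1})\|^2\bigr),
\end{equation*}
with $M$ as defined in \cref{eq: M}. Combining with \Cref{prop: lyapunov}, which bounds each of the three squared quantities on the right by a constant multiple of $r_t - r_{t+1}$, gives $\dist_{\partial H(z_{t+1})}(\zero)^2 \le 2M(r_t - r_{t+1})$, and squaring the \KL inequality then produces the master recursion displayed above.

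With the master recursion in hand, the four rates follow from standard scalar analyses. For $\theta=1$, the recursion becomes $1 \le 2Mc^2(r_t - r_{t+1})$, forcing $r_t$ to reach $0$ in finitely many steps. For $\theta\in(\frac{1}{2},1)$, set $\alpha := 2(1-\theta)\in(0,1)$; dropping $r_{t+1}$ on the right yields $r_{t+1} \le (2Mc^2)^{1/\alpha} r_t^{1/\alpha}$ with exponent $1/\alpha>1$, and iterating in logarithmic variables gives the doubly-exponential bound \cref{eq: HsuperConverge}. For $\theta=\frac{1}{2}$, the recursion rearranges to $r_{t+1} \le (1+(2Mc^2)^{-1})^{-1} r_t$, giving \cref{eq: HlinearConverge}. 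For $\theta\in(0,\frac{1}{2})$, with $\beta := 1-2\theta>0$, I would invoke the integral comparison $(r_t-r_{t+1})\,r_{t+1}^{-(1+\beta)} \ge \beta^{-1}(r_{t+1}^{-\beta}-r_t^{-\beta})$ obtained from monotonicity of $x\mapsto x^{-(1+\beta)}$; a dichotomy on whether $r_{t+1}\ge \frac{1}{2}r_t$ (where the integral bound is tight) or $r_{t+1}<\frac{1}{2}r_t$ (where the bound $(1-2^{-\beta})r_{t+1}^{-\beta}\le r_{t+1}^{-\beta}-r_t^{-\beta}$ is used instead) shows $r_t^{-\beta}$ grows at least linearly, producing \cref{eq: HsubConverge} with the stated constant $C$.

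The main obstacle is the subgradient construction in the second paragraph: one must carefully combine three distinct error sources (the smoothness gap $\nabla_1 f(x_{t+1},y^*(x_{t+1}))-\nabla_1 f(x_t,y_t)$, the proximal residual $\eta_x^{-1}(x_{t+1}-x_t)$, and the $x$-subdifferential of the coupling term $\|y_{t+1}-y^*(x_{t+1})\|^2$) and re-express all of them in exactly the three squared quantities that \Cref{prop: lyapunov} controls, with worst-case constants summing to the $M$ in \cref{eq: M}. Once this bound is in place, the case analysis is a routine scalar exercise, and the dichotomy needed in the sub-linear regime is the only additional subtlety.
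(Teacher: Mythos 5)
Your proposal follows essentially the same route as the paper: the same subgradient bound feeding the \KL inequality and the Lyapunov decrease to obtain the scalar recursion $d_t^{2(1-\theta)}\le 2Mc^2(d_{t-1}-d_t)$, followed by the identical four-case analysis, including the dichotomy $d_{t-1}\lessgtr 2d_t$ with $\psi(s)=\frac{1}{1-2\theta}s^{-(1-2\theta)}$ in the sublinear regime. One small slip: your integral comparison is written with the endpoint value $r_{t+1}^{-(1+\beta)}$, which \emph{upper}-bounds $\beta^{-1}(r_{t+1}^{-\beta}-r_t^{-\beta})$, whereas the argument needs the lower bound $\beta^{-1}(r_{t+1}^{-\beta}-r_t^{-\beta})\ge r_t^{-(1+\beta)}(r_t-r_{t+1})$ (the paper's choice); since in the regime $r_{t+1}\ge\frac{1}{2}r_t$ the two endpoint values differ by at most $2^{1+\beta}\le 4$, your intended argument still goes through with the stated constant.
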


It can be seen from the above theorem that the convergence rate of the Lyapunov function of proximal-GDA is determined by the \KL parameter $\theta$. A larger $\theta$ implies that the local geometry of $H$ is `sharper', and hence the corresponding convergence rate is orderwise faster. In particular, the algorithm converges at a linear rate when the \KL geometry holds with $\theta=\frac{1}{2}$ (see the item 3), which is a generalization of the Polyak-{\L}ojasiewicz (PL) geometry. As a comparison, in the existing analysis of GDA, such a linear convergence result is established under stronger geometries, e.g., {convex-strongly-concave \cite{du2019linear}, strongly-convex-strongly-concave \cite{mokhtari2020unified,zhang2020suboptimality} and two-sided PL condition \cite{yang2020global}.} In summary, the above theorem provides a full characterization of the fast convergence rates of proximal-GDA in the full spectrum of the \KL geometry.

Moreover, we also obtain the following asymptotic convergence rates of the variable sequences that are generated by proximal-GDA under different parameterization of the \KL geometry. The proof is presented in \Cref{sec_thm4proof}.

\begin{restatable}[{Variable} convergence rate]{thm}{thmptconvrate}\label{thm: 4}
	Under the same conditions as those of \Cref{thm: 2}, the sequences $\{x_t, y_t\}_t$ converge to their limits $x^*, y^*(x^*)$ respectively at the following rates.
	\begin{enumerate}[leftmargin=*,topsep=0pt,noitemsep]
		\item If \KL geometry holds with $\theta=1$, then $(x_t, y_t)\to (x^*, y^*(x^*))$ within {\bf finite number of iterations};
		\item If \KL geometry holds with $\theta\in (\frac{1}{2}, 1)$, then $(x_t, y_t)\to (x^*, y^*(x^*))$ {\bf super-linearly} as
		\begin{align}
		\max \big\{ \|x_t-x^*\|, \|y_t-y^*(x^*)\|\big\}\le \mathcal{O}\Big( \exp\Big(-\big(\frac{1}{2(1-\theta)}\big)^{t-t_0}\Big)\Big),\quad \forall t\ge t_0; \label{eq: xySuperConverge}
		\end{align}
		\item If \KL geometry holds with $\theta=\frac{1}{2}$, then $(x_t, y_t)\to (x^*, y^*(x^*))$ {\bf linearly} as
		\begin{align}
		\max \big\{\|x_t-x^*\|, \|y_t-y^*(x^*)\|\big\} \le \mathcal{O}\Big(\Big(\min\Big\{2, 1+\frac{1}{2Mc^2}\Big\}\Big)^{(t_0-t)/2}\Big), \quad \forall t\ge t_0; \label{eq: xylinearConverge}
		\end{align}
		\item If \KL geometry holds with $\theta\in (0, \frac{1}{2})$, then $(x_t, y_t)\to (x^*, y^*(x^*))$ {\bf sub-linearly} as
		\begin{align}
		\max \big\{\|x_t-x^*\|, \|y_t-y^*(x^*)\| \big\} \le  \mathcal{O}\Big((t-t_0)^{-\frac{\theta}{1-2\theta}}\Big), \quad \forall t\ge t_0. \label{eq: xysubConverge}
		\end{align}
	\end{enumerate}
\end{restatable}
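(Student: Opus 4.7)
The plan is to convert the function value rates from Theorem 3 into variable rates by controlling the tail sum $S_t:=\sum_{k\ge t}\|x_{k+1}-x_k\|$. Setting $\Delta_t := H(z_t)-H^*$, the monotonicity inequality \eqref{eq: Hdec} of Proposition 2 gives the two per-step bounds
\begin{equation*}
\|x_{t+1}-x_t\|\le \sqrt{(\Delta_t-\Delta_{t+1})/2}, \qquad \|y_t-y^*(x_t)\|\le 2\kappa\sqrt{\Delta_t-\Delta_{t+1}}.
\end{equation*}
The Cauchy argument already used in the proof of Theorem 2 gives $\|x_t-x^*\|\le S_t$, and the $\kappa$-Lipschitzness of $y^*$ from Proposition 1 gives the decomposition $\|y_t-y^*(x^*)\|\le \|y_t-y^*(x_t)\|+\kappa\|x_t-x^*\|$. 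Hence it suffices to bound $S_t$ and $\|y_t-y^*(x_t)\|$ in each \KL regime and then take the maximum; the $y$-side estimate is essentially free, since $\|y_t-y^*(x_t)\|\le 2\kappa\sqrt{\Delta_t}$ inherits the $\sqrt{\Delta_t}$ rate and is dominated by the $x$-bound in every regime.

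For $\theta=1$, Theorem 3 yields $\Delta_t=0$ for all $t$ large, and the per-step inequalities immediately force $x_{t+1}=x_t$ and $y_t=y^*(x_t)$, which combined with Theorem 2 gives finite-step variable convergence. For $\theta\in[1/2,1)$, the rate on $\Delta_t$ from Theorem 3 is (super)geometric, so $\sqrt{\Delta_t}$ is summable and the tail $S_t$ inherits a (super)geometric rate. In the super-linear case $\theta\in(1/2,1)$, the doubly-exponential rate is preserved up to re-indexing of $t_0$, since the factor $1/2$ acquired from the square root is absorbed into the shift $t_0 \mapsto t_0+\log_{1/(2(1-\theta))} 2$. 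In the linear case $\theta=1/2$, direct geometric summation of $\sqrt{\Delta_k}$ yields a tail bound of order $(1+\tfrac{1}{2Mc^2})^{(t_0-t)/2}$; the appearance of $\min\{2, 1+\tfrac{1}{2Mc^2}\}$ comes from tracking the alternate estimate $\|x_{t+1}-x_t\|\le\sqrt{\Delta_t/2}$ whose geometric-series ratio $(1+\tfrac{1}{2Mc^2})^{-1/2}$ is compared against the universal upper bound $1/\sqrt{2}$ produced by the square root of the contraction factor.

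The delicate case is $\theta\in(0,1/2)$, where Theorem 3 gives only $\Delta_t=\mathcal{O}(t^{-1/(1-2\theta)})$ and $\sqrt{\Delta_t}$ is no longer summable. Here the plan is to adapt the classical Attouch--Bolte concavity trick: with $\varphi(s)=\tfrac{c}{\theta}s^\theta$, concavity of $\varphi$ combined with the \KL inequality $\varphi'(\Delta_t)\cdot\dist_{\partial H(z_t)}(\zero)\ge 1$ and the subgradient estimate $\dist_{\partial H(z_t)}(\zero)\le \sqrt{2M}\,(\|x_t-x_{t-1}\|^2+\|y_t-y^*(x_t)\|^2)^{1/2}$ already encoded in $M$ at \eqref{eq: M} yields $\varphi(\Delta_t)-\varphi(\Delta_{t+1})\ge \varphi'(\Delta_t)(\Delta_t-\Delta_{t+1})$, which after an AM-GM step telescopes into $S_t\lesssim \varphi(\Delta_t)=\mathcal{O}(\Delta_t^\theta)$. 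Substituting Theorem 3's sub-linear rate on $\Delta_t$ then gives $S_t=\mathcal{O}((t-t_0)^{-\theta/(1-2\theta)})$, hence the stated bound on $\|x_t-x^*\|$, and the transfer to $y_t-y^*(x^*)$ is immediate via Proposition 1. The chief obstacle is precisely this sub-linear regime: the naive square-root bound diverges, so one must invoke the concavity trick and track the exponent carefully to recover $\theta/(1-2\theta)$ rather than the weaker $1/(2(1-2\theta))$ one would obtain from a direct substitution.
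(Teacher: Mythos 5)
Your proposal is correct, and for two of the four cases it takes a genuinely simpler route than the paper. Cases 1 and 4 essentially mirror the paper's argument: finite termination drops out of \cref{eq: Hdec}, and the sub-linear regime is handled by the Attouch--Bolte concavity/AM-GM telescoping, which in the paper appears as the recursion $S_t\le \frac{8c\sqrt{2M}}{\theta}\Delta_t^\theta+\frac12 S_{t-1}$; your shortcut $S_t\lesssim \varphi(\Delta_t)+A_{t-1}$ with $A_{t-1}\lesssim\sqrt{\Delta_{t-1}}=o(\Delta_t^\theta)$ (valid precisely because $\theta<\frac12$) avoids the paper's convolution-sum iteration, though you should state that leftover boundary term explicitly. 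For cases 2 and 3 the paper does \emph{not} square-root the function-value rates: it builds a separate recursion $C_1A_t\le(C_1A_{t-1})^{1/(2(1-\theta))}$ for the super-linear case and runs the $\frac12 S_{t-1}$ iteration for the linear case. Your direct summation of $\sqrt{\Delta_k}$ is legitimate in both regimes (the super-geometric tail is dominated by its first term, and the re-indexing of $t_0$ absorbs the factor $\frac12$ in the exponent), and in the linear case it actually yields the sharper bound $(1+\frac{1}{2Mc^2})^{(t_0-t)/2}$ with no $\min\{2,\cdot\}$ at all. Your explanation of where that $\min$ comes from is therefore not the paper's: in the paper it arises from the competition between the $2^{-t}$ decay generated by iterating $S_t\le C\Delta_t^{1/2}+\frac12 S_{t-1}$ and the $(1+\frac{1}{2Mc^2})^{-t/2}$ decay of $\Delta_t^{1/2}$, not from the square root of the contraction factor; since your bound implies the stated one, this is a cosmetic rather than substantive discrepancy. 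The $y$-side reduction via $\|y_t-y^*(x^*)\|\le\|y_t-y^*(x_t)\|+\kappa\|x_t-x^*\|$ and $\|y_t-y^*(x_t)\|\le 2\kappa\sqrt{\Delta_t}$ is exactly the paper's, and your observation that $\sqrt{\Delta_t}$ is dominated by the $x$-rate in every regime is correct. The only loose end worth tightening is the indexing in your subgradient estimate: the bound derived from \cref{eq: M} controls $\dist_{\partial H(z_t)}(\zero)$ by $\|x_t-x_{t-1}\|$ and $\|y_{t-1}-y^*(x_{t-1})\|$, not $\|y_t-y^*(x_t)\|$; the one-step shift is what makes the telescoping close, so it should be stated consistently.
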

To the best of our knowledge, this is the first characterization of the variable convergence rates of proximal-GDA in the full spectrum of the \KL geometry. It can be seen that, similar to the convergence rate results of the function value sequence, the convergence rate of the variable sequences is also affected by the parameterization of the \KL geometry.

\section{Conclusion}
In this paper, we develop a new analysis framework for the proximal-GDA algorithm in nonconvex-strongly-concave optimization. Our key observation is that proximal-GDA has a intrinsic Lyapunov function that monotonically decreases in the minimax optimization process. Such a property demonstrates the stability of the algorithm. Moreover, we establish the formal variable convergence of proximal-GDA to a critical point of the objective function under the ubiquitous \KL geometry. Our results fully characterize the impact of the parameterization of the \KL geometry on the convergence rate of the algorithm. In the future study, we will leverage such an analysis framework to explore the convergence of stochastic GDA algorithms and their variance-reduced variants.

\section*{Acknowledgement}
The work of T. Xu and Y. Liang was supported partially by the U.S. National Science Foundation under the grants CCF-1900145 and CCF-1909291.

{%\small
	\bibliographystyle{apalike}
	\bibliography{./ref}

\begin{thebibliography}{}

\bibitem[Adolphs et~al., 2019]{adolphs2019local}
Adolphs, L., Daneshmand, H., Lucchi, A., and Hofmann, T. (2019).
\newblock Local saddle point optimization: A curvature exploitation approach.
\newblock In {\em Proc. International Conference on Artificial Intelligence and
  Statistics (AISTATS)}, pages 486--495.

\bibitem[Attouch and Bolte, 2009]{Attouch2009}
Attouch, H. and Bolte, J. (2009).
\newblock On the convergence of the proximal algorithm for nonsmooth functions
  involving analytic features.
\newblock {\em Mathematical Programming}, 116(1-2):5--16.

\bibitem[Attouch et~al., 2013]{attouch2013convergence}
Attouch, H., Bolte, J., and Svaiter, B.~F. (2013).
\newblock Convergence of descent methods for semi-algebraic and tame problems:
  proximal algorithms, forward--backward splitting, and regularized
  gauss--seidel methods.
\newblock {\em Mathematical Programming}, 137(1-2):91--129.

\bibitem[Bernhard and Rapaport, 1995]{bernhard1995theorem}
Bernhard, P. and Rapaport, A. (1995).
\newblock On a theorem of danskin with an application to a theorem of von
  neumann-sion.
\newblock {\em Nonlinear Analysis: Theory, Methods \& Applications},
  24(8):1163--1181.

\bibitem[Bolte et~al., 2007]{Bolte2007}
Bolte, J., Daniilidis, A., and Lewis, A. (2007).
\newblock The {{\L}}ojasiewicz inequality for nonsmooth subanalytic functions
  with applications to subgradient dynamical systems.
\newblock {\em SIAM Journal on Optimization}, 17:1205--1223.

\bibitem[Bolte et~al., 2014]{Bolte2014}
Bolte, J., Sabach, S., and Teboulle, M. (2014).
\newblock Proximal alternating linearized minimization for nonconvex and
  nonsmooth problems.
\newblock {\em Mathematical Programming}, 146(1-2):459--494.

\bibitem[Bo{\c{t}} and B{\"o}hm, 2020]{boct2020alternating}
Bo{\c{t}}, R.~I. and B{\"o}hm, A. (2020).
\newblock Alternating proximal-gradient steps for (stochastic)
  nonconvex-concave minimax problems.
\newblock {\em ArXiv:2007.13605}.

\bibitem[Cherukuri et~al., 2017]{cherukuri2017saddle}
Cherukuri, A., Gharesifard, B., and Cortes, J. (2017).
\newblock Saddle-point dynamics: conditions for asymptotic stability of saddle
  points.
\newblock {\em SIAM Journal on Control and Optimization}, 55(1):486--511.

\bibitem[Daskalakis and Panageas, 2018]{daskalakis2018limit}
Daskalakis, C. and Panageas, I. (2018).
\newblock The limit points of (optimistic) gradient descent in min-max
  optimization.
\newblock In {\em Proc. Advances in Neural Information Processing Systems
  (NeurIPS)}, pages 9236--9246.

\bibitem[Du and Hu, 2019]{du2019linear}
Du, S.~S. and Hu, W. (2019).
\newblock Linear convergence of the primal-dual gradient method for
  convex-concave saddle point problems without strong convexity.
\newblock In {\em Proc. International Conference on Artificial Intelligence and
  Statistics (AISTATS)}, pages 196--205.

\bibitem[Ferreira et~al., 2012]{ferreira2012minimax}
Ferreira, M. A.~M., Andrade, M., Matos, M. C.~P., Filipe, J.~A., and Coelho,
  M.~P. (2012).
\newblock Minimax theorem and nash equilibrium.

\bibitem[Frankel et~al., 2015]{Frankel2015}
Frankel, P., Garrigos, G., and Peypouquet, J. (2015).
\newblock Splitting methods with variable metric for
  {K}urdyka--{{\L}}ojasiewicz functions and general convergence rates.
\newblock {\em Journal of Optimization Theory and Applications},
  165(3):874--900.

\bibitem[Goodfellow et~al., 2014]{goodfellow2014generative}
Goodfellow, I., Pouget-Abadie, J., Mirza, M., Xu, B., Warde-Farley, D., Ozair,
  S., Courville, A., and Bengio, Y. (2014).
\newblock Generative adversarial nets.
\newblock In {\em Proc. Advances in Neural Information Processing Systems
  (NeurIPS)}, pages 2672--2680.

\bibitem[Ho and Ermon, 2016]{ho2016generative}
Ho, J. and Ermon, S. (2016).
\newblock Generative adversarial imitation learning.
\newblock In {\em Proc. Advances in Neural Information Processing Systems
  (NeurIPS)}, pages 4565--4573.

\bibitem[Huang et~al., 2020]{huang2020accelerated}
Huang, F., Gao, S., Pei, J., and Huang, H. (2020).
\newblock Accelerated zeroth-order momentum methods from mini to minimax
  optimization.
\newblock {\em ArXiv:2008.08170}.

\bibitem[Jin et~al., 2020]{jin2019local}
Jin, C., Netrapalli, P., and Jordan, M.~I. (2020).
\newblock What is local optimality in nonconvex-nonconcave minimax
  optimization?

\bibitem[Karimi et~al., 2016]{Karimi2016}
Karimi, H., Nutini, J., and Schmidt, M. (2016).
\newblock {\em Linear Convergence of Gradient and Proximal-Gradient Methods
  Under the Polyak-{\L}ojasiewicz Condition}, pages 795--811.

\bibitem[Kruger, 2003]{kruger2003frechet}
Kruger, A.~Y. (2003).
\newblock On fr{\'e}chet subdifferentials.
\newblock {\em Journal of Mathematical Sciences}, 116(3):3325--3358.

\bibitem[Li et~al., 2017]{Li2017}
Li, Q., Zhou, Y., Liang, Y., and Varshney, P.~K. (2017).
\newblock Convergence analysis of proximal gradient with momentum for nonconvex
  optimization.
\newblock In {\em Proc. International Conference on Machine Learning (ICML)},
  volume~70, pages 2111--2119.

\bibitem[Lin et~al., 2020]{lin2019gradient}
Lin, T., Jin, C., and Jordan, M.~I. (2020).
\newblock On gradient descent ascent for nonconvex-concave minimax problems.

\bibitem[Lions and Mercier, 1979]{lions1979splitting}
Lions, P.-L. and Mercier, B. (1979).
\newblock Splitting algorithms for the sum of two nonlinear operators.
\newblock {\em SIAM Journal on Numerical Analysis}, 16(6):964--979.

\bibitem[{\L}ojasiewicz, 1963]{Lojasiewicz1963}
{\L}ojasiewicz, S. (1963).
\newblock A topological property of real analytic subsets.
\newblock {\em Coll. du CNRS, Les equations aux derivees partielles}, page
  87–89.

\bibitem[Lu et~al., 2020]{lu2020hybrid}
Lu, S., Tsaknakis, I., Hong, M., and Chen, Y. (2020).
\newblock Hybrid block successive approximation for one-sided non-convex
  min-max problems: algorithms and applications.
\newblock {\em IEEE Transactions on Signal Processing}.

\bibitem[Mokhtari et~al., 2020]{mokhtari2020unified}
Mokhtari, A., Ozdaglar, A., and Pattathil, S. (2020).
\newblock A unified analysis of extra-gradient and optimistic gradient methods
  for saddle point problems: Proximal point approach.
\newblock In {\em Proc. International Conference on Artificial Intelligence and
  Statistics (AISTATS)}, pages 1497--1507.

\bibitem[Nedi{\'c} and Ozdaglar, 2009]{nedic2009subgradient}
Nedi{\'c}, A. and Ozdaglar, A. (2009).
\newblock Subgradient methods for saddle-point problems.
\newblock {\em Journal of optimization theory and applications},
  142(1):205--228.

\bibitem[Neumann, 1928]{neumann1928theorie}
Neumann, J.~v. (1928).
\newblock Zur theorie der gesellschaftsspiele.
\newblock {\em Mathematische annalen}, 100(1):295--320.

\bibitem[Noll and Rondepierre, 2013]{Noll2013}
Noll, D. and Rondepierre, A. (2013).
\newblock Convergence of linesearch and trust-region methods using the
  {K}urdyka--{{\L}}ojasiewicz inequality.
\newblock In {\em Proc. Computational and Analytical Mathematics}, pages
  593--611.

\bibitem[Nouiehed et~al., 2019]{nouiehed2019solving}
Nouiehed, M., Sanjabi, M., Huang, T., Lee, J.~D., and Razaviyayn, M. (2019).
\newblock Solving a class of non-convex min-max games using iterative first
  order methods.
\newblock In {\em Proc. Advances in Neural Information Processing Systems
  (NeurIPS)}, pages 14934--14942.

\bibitem[Qiu et~al., 2020]{qiu2020single}
Qiu, S., Yang, Z., Wei, X., Ye, J., and Wang, Z. (2020).
\newblock Single-timescale stochastic nonconvex-concave optimization for smooth
  nonlinear td learning.
\newblock {\em ArXiv:2008.10103}.

\bibitem[Robinson, 1951]{robinson1951iterative}
Robinson, J. (1951).
\newblock An iterative method of solving a game.
\newblock {\em Annals of mathematics}, 54(2):296--301.

\bibitem[Rockafellar, 1970]{rockafellar1970convex}
Rockafellar, R.~T. (1970).
\newblock {\em Convex analysis}.
\newblock Number~28. Princeton university press.

\bibitem[Rockafellar and Wets, 2009]{rockafellar2009variational}
Rockafellar, R.~T. and Wets, R. J.-B. (2009).
\newblock {\em Variational analysis}, volume 317.
\newblock Springer Science \& Business Media.

\bibitem[Sinha et~al., 2017]{Sinha2017CertifyingSD}
Sinha, A., Namkoong, H., and Duchi, J.~C. (2017).
\newblock Certifying some distributional robustness with principled adversarial
  training.
\newblock In {\em Proc. International Conference on Learning Representations
  (ICLR)}.

\bibitem[Song et~al., 2018]{MultiAgent_Jiaming}
Song, J., Ren, H., Sadigh, D., and Ermon, S. (2018).
\newblock Multi-agent generative adversarial imitation learning.
\newblock In {\em Proc. Advances in Neural Information Processing Systems
  (NeurIPS)}, pages 7461--7472.

\bibitem[Xie et~al., 2020]{xielower}
Xie, G., Luo, L., Lian, Y., and Zhang, Z. (2020).
\newblock Lower complexity bounds for finite-sum convex-concave minimax
  optimization problems.

\bibitem[Xu et~al., 2020a]{xu2020enhanced}
Xu, T., Wang, Z., Liang, Y., and Poor, H.~V. (2020a).
\newblock Enhanced first and zeroth order variance reduced algorithms for
  min-max optimization.
\newblock {\em ArXiv:2006.09361}.

\bibitem[Xu et~al., 2020b]{xu2020unified}
Xu, Z., Zhang, H., Xu, Y., and Lan, G. (2020b).
\newblock A unified single-loop alternating gradient projection algorithm for
  nonconvex-concave and convex-nonconcave minimax problems.
\newblock {\em ArXiv:2006.02032}.

\bibitem[Yang et~al., 2020]{yang2020global}
Yang, J., Kiyavash, N., and He, N. (2020).
\newblock Global convergence and variance reduction for a class of
  nonconvex-nonconcave minimax problems.

\bibitem[Yue et~al., 2018]{Yue2018}
Yue, M., Zhou, Z., and So, M. (2018).
\newblock {On the Quadratic Convergence of the Cubic Regularization Method
  under a Local Error Bound Condition}.
\newblock {\em ArXiv:1801.09387v1}.

\bibitem[Zhang and Wang, 2020]{zhang2020suboptimality}
Zhang, G. and Wang, Y. (2020).
\newblock On the suboptimality of negative momentum for minimax optimization.
\newblock {\em ArXiv:2008.07459}.

\bibitem[Zhou and Liang, 2017]{Zhou2017}
Zhou, Y. and Liang, Y. (2017).
\newblock {Characterization of Gradient Dominance and Regularity Conditions for
  Neural Networks}.
\newblock {\em ArXiv:1710.06910v2}.

\bibitem[Zhou et~al., 2018a]{Zhou_2017a}
Zhou, Y., Liang, Y., Yu, Y., Dai, W., and Xing, E.~P. (2018a).
\newblock {Distributed Proximal Gradient Algorithm for Partially Asynchronous
  Computer Clusters}.
\newblock {\em Journal of Machine Learning Research (JMLR)}, 19(19):1--32.

\bibitem[Zhou et~al., 2020]{Zhou-ijcai2020}
Zhou, Y., Wang, Z., Ji, K., Liang, Y., and Tarokh, V. (2020).
\newblock Proximal gradient algorithm with momentum and flexible parameter
  restart for nonconvex optimization.
\newblock In {\em Proc. International Joint Conference on Artificial
  Intelligence, {IJCAI-20}}, pages 1445--1451.

\bibitem[Zhou et~al., 2018b]{zhou2018convergence}
Zhou, Y., Wang, Z., and Liang, Y. (2018b).
\newblock Convergence of cubic regularization for nonconvex optimization under
  kl property.
\newblock In {\em Proc. Advances in Neural Information Processing Systems
  (NeurIPS)}, pages 3760--3769.

\bibitem[Zhou et~al., 2016a]{Zhou2016}
Zhou, Y., Yu, Y., Dai, W., Liang, Y., and Xing, P. (2016a).
\newblock On convergence of model parallel proximal gradient algorithm for
  stale synchronous parallel system.
\newblock In {\em Proc. International Conference on Artificial Intelligence and
  Statistics (AISTATS}, volume~51, pages 713--722.

\bibitem[Zhou et~al., 2016b]{Zhou2016b}
Zhou, Y., Zhang, H., and Liang, Y. (2016b).
\newblock Geometrical properties and accelerated gradient solvers of non-convex
  phase retrieval.
\newblock {\em In Proc. 54th Annual Allerton Conference on Communication,
  Control, and Computing (Allerton)}, pages 331--335.

\end{thebibliography}
}

\newpage
\section*{Supplementary material}
\appendix
\section{Proof of \Cref{prop_Phiystar}} \label{sec_prop1proof}

\propphy*
\begin{proof}
	We first prove item 1.
	Since $f(x,y)$ is strongly concave in $y$ for every $x$ and $h(y)$ is convex, the mapping $y^*(x)=\arg\max _{y\in \mathcal{Y}}f(x,y)-h(y)$ is uniquely defined. We first show that $y^*(x)$ is a Lipschitz mapping. Consider two arbitrary points $x_1, x_2$. The optimality conditions of $y^*(x_1)$ and $y^*(x_2)$ imply that
	\begin{align}
	\inner{y - y^*(x_1)}{{\nabla_2 f}(x_1, y^*(x_1)) - u_1} &\le 0, \quad \forall y\in \mathcal{Y}, u_1\in \partial h(y^*(x_1)), \label{eq: 6}\\
	\inner{y - y^*(x_2)}{{\nabla_2 f}(x_2, y^*(x_2)) - u_2} &\le 0, \quad \forall y\in \mathcal{Y}, u_2\in \partial h(y^*(x_2)).\label{eq: 7}
	\end{align}
	Setting $y = y^*(x_2)$ in \cref{eq: 6}, $y = y^*(x_1)$ in \cref{eq: 7} and summing up the two inequalities, 	we obtain that
	\begin{align}
	\inner{y^*(x_2)- y^*(x_1)}{{\nabla_2 f}(x_1, y^*(x_1)) - {\nabla_2 f}(x_2, y^*(x_2)) - u_1 + u_2} &\le 0.
	\end{align}
	Since $\partial h$ is a monotone operator (by convexity), we know that $\inner{u_2 - u_1}{y^*(x_2)- y^*(x_1)} \ge 0$. Hence, the above inequality further implies that  
	\begin{align}
	\inner{y^*(x_2)- y^*(x_1)}{{\nabla_2 f}(x_1, y^*(x_1)) - {\nabla_2 f}(x_2, y^*(x_2))} &\le 0.
	\end{align}
	Next, by strong concavity of $f(x_1, \cdot)$, we have that
	\begin{align}
	\inner{y^*(x_2)- y^*(x_1)}{{\nabla_2 f}(x_1, y^*(x_2)) - {\nabla_2 f}(x_1, y^*(x_1))} + \mu \|y^*(x_1) - y^*(x_2)\|^2&\le 0.
	\end{align}
	Adding up the above two inequalities yields that
	\begin{align}
	\mu \|y^*(x_1) - y^*(x_2)\|^2&\le \inner{y^*(x_2)- y^*(x_1)}{{\nabla_2 f}(x_2, y^*(x_2)) - {\nabla_2 f}(x_1, y^*(x_2))} \nonumber\\
	&\le \|y^*(x_2)- y^*(x_1)\| \|{\nabla_2 f}(x_2, y^*(x_2)) - {\nabla_2 f}(x_1, y^*(x_2))\| \nonumber\\
	&\le L\|y^*(x_2)- y^*(x_1)\| \|x_2 - x_1\|. \nonumber
	\end{align}
	The above inequality shows that $\|y^*(x_1) - y^*(x_2)\| \le \kappa \|x_2 - x_1\|$, and item 1 is proved. 
	
	Next, we will prove item 2.
%	Next, we prove $\nabla\Phi(x)=\nabla_1 f[x,y^*(x)]$ in item 2 by verifying the condition of the Danskin theorem \cite{bernhard1995theorem}, that is, we aim to find a compact set $B\subset\mathcal{Y}$ such that $y^*(x)=\arg\max_{y\in B}f(x,y)-h(y)$ and the functions  $f(x,y)-h(y)$ and $\nabla_1 f(x,y)$ are continuous in $(x,y)\in \mathbb{R}^m \times B$. \\
	
%	Since $h$ is convex and proper, $h$ is continuous on the nonempty convex set $A:=\{y\in\mathcal{Y}: h(y)\in\mathbb{R}\}$ (see Corollary 10.1.1 of \cite{rockafellar1970convex}), and thus $f(x,y)-h(y)$ is continuous in  $(x,y)\in \mathbb{R}^m \times A$. $\nabla_1 f(x,y)$ is $L$-lipschitz continuous by \Cref{assum: f}. Since $A\subset \mathcal{Y}$ and $\mathcal{Y}$ is compact, $A$ is a bounded but possibly non-compact set. \\
	
	Consider $A_n=\{y^*(x): x\in\mathbb{R}^m, \|x\|\le n\}\subset \mathcal{Y}$. \red{Since $h$ is proper and convex, $h(y_0)<+\infty$ for some $y_0\in\mathcal{Y}$. Since $f$ is $L$-smooth, its value is finite everywhere. Hence, for any $x\in\mathbb{R}^m$, $\Phi(x)=\max_{y\in\mathcal{Y}} f(x,y)-h(y)\ge f(x,y_0)-h(y_0)>-\infty$, so $h(y^*(x))=f(x,y^*(x))-\Phi(x)<+\infty$.} Therefore, based on Corollary 10.1.1 of \cite{rockafellar1970convex}, $h(y)$ is continuous on $A_n$ and thus $f(x,y)-h(y)$ is continuous in $(x,y)\in \mathbb{R}^m \times A_n$. Also, $\nabla_1 f(x,y)$ is continuous in $(x,y)\in \mathbb{R}^m \times A_n$ since $f$ is $L$-smooth. For any sequence $\{x_k\}$ such that $\|x_k\| \le n$ and $y^*(x_k) \to y\in\mathcal{Y}$, $y=y^*(x')$ for any limit point $x'$ of $\{x_k\}$ (there is at least one such limit point since $\|x_k\| \le n$) since we have proved that $y^*$ is continuous. As $\|x'\|\le n$, $y\in A_n$. Hence, $A_n$ is closed. As $A_n$ is included in bounded $\mathcal{Y}$, $A_n$ is compact. Therefore, based on the Danskin theorem \cite{bernhard1995theorem}, the function $\Phi_n(x):=\arg\max_{y\in A_n}f(x,y)-h(y)$ is differntialable with $\nabla \Phi_n(x)=\nabla_1 f(x,y^*(x))$. On one hand, $\Phi_n(x) \le \Phi(x)$ since $A_n\subset \mathcal{Y}$. On the other hand, when $\|x\|\le n$, $y^*(x)\in A_n$, so $\Phi(x)=f(x,y^*(x))-h(y^*(x)) \le \Phi_n(x)$. Hence, when $\|x\|\le n$, $\Phi(x)=\Phi_n(x)$ and thus $\nabla \Phi(x)=\nabla \Phi_n(x)=\nabla_1 f(x,y^*(x))$. Since $n$ can be arbitrarily large, $\nabla \Phi(x)=\nabla_1 f(x,y^*(x))$ for any $x\in\mathbb{R}^m$. 
	
	Next, consider any $x_1, x_2\in\mathbb{R}^m$, we obtain that
	\begin{align}
		\|\nabla\Phi(x_2)-\nabla\Phi(x_1)\|=& \|\nabla_1 f(x_2,y^*(x_2))-\nabla_1 f(x_1,y^*(x_1))\|\nonumber\\
		\le& L\|x_2-x_1\|+L\|y^*(x_2)-y^*(x_1)\|\nonumber\\
		\le& L\|x_2-x_1\|+L\kappa\|x_2-x_1\|\nonumber\\
		=&L(1+\kappa)\|x_2-x_1\|,\nonumber
	\end{align}
	which implies that $\Phi(x)$ is $L(1+\kappa)$-smooth.

\end{proof}
\section{Proof of \Cref{prop: lyapunov}} \label{sec_prop2proof}

\proplya*

\begin{proof}
 Consider the $t$-th iteration of proximal-GDA. By smoothness of $\Phi$ we obtain that
\begin{align}
	\Phi(x_{t+1}) \le 	\Phi(x_{t}) + \inner{x_{t+1}-x_t}{\nabla \Phi(x_{t}) } + \frac{L(1+\kappa)}{2}\|x_{t+1}-x_t\|^2. \label{eq: 1}
\end{align}
On the other hand, by the definition of the proximal gradient step of $x_t$, we have
\begin{align*}
	g(x_{t+1})+\frac{1}{2\eta_x} \|x_{t+1} - x_t +\eta_{x} \nabla_1 f(x_t,y_t)\|^2 \le g(x_t)+\frac{1}{2\eta_x} \|\eta_{x} \nabla_1 f(x_t,y_t)\|^2,
\end{align*}
which further simplifies to 
\begin{align}
g(x_{t+1})\le g(x_t)-\frac{1}{2\eta_x} \|x_{t+1} - x_t\|^2 - \inner{x_{t+1} - x_t }{\nabla_1 f(x_t,y_t)}. \label{eq: 2}
\end{align}
Adding up \cref{eq: 1} and \cref{eq: 2} yields that
\begin{align}
	&\Phi(x_{t+1}) +g(x_{t+1}) \nonumber\\
	&\le 	\Phi(x_{t})+g(x_t) - \Big(\frac{1}{2\eta_x} -\frac{L(1+\kappa)}{2} \Big)\|x_{t+1}-x_t\|^2 + \inner{x_{t+1}-x_t}{\nabla \Phi(x_{t}) - \nabla_1 f(x_t,y_t)} \nonumber\\
	&= \Phi(x_{t})+g(x_t) - \Big(\frac{1}{2\eta_x} -\frac{L(1+\kappa)}{2} \Big)\|x_{t+1}-x_t\|^2 + \|x_{t+1}-x_t\| \|\nabla \Phi(x_{t}) - \nabla_1 f(x_t,y_t)\| \nonumber\\
	&= \Phi(x_{t})+g(x_t) - \Big(\frac{1}{2\eta_x} -\frac{L(1+\kappa)}{2} \Big)\|x_{t+1}-x_t\|^2 + \|x_{t+1}-x_t\| \|\nabla_1 f(x_{t},y^*(x_t)) - \nabla_1 f(x_t,y_t)\| \nonumber\\
	&\le \Phi(x_{t})+g(x_t) - \Big(\frac{1}{2\eta_x} -\frac{L(1+\kappa)}{2} \Big)\|x_{t+1}-x_t\|^2 + L\|x_{t+1}-x_t\| \|y^*(x_t) - y_t\|.\nonumber\\
	&{\le \Phi(x_{t})+g(x_t) - \Big(\frac{1}{2\eta_x} -\frac{L(1+\kappa)}{2} - \frac{L^2\kappa^2}{2} \Big)\|x_{t+1}-x_t\|^2 + \frac{1}{2\kappa^2}\|y^*(x_t) - y_t\|^2} \label{eq: 3}
\end{align}

Next, consider the term $\|y^*(x_t) - y_t\|$ in the above inequality. Note that $y^*(x_t)$ is the unique minimizer of the strongly concave function $f(x_t,y)-h(y)$, and $y_{t+1}$ is obtained by applying one proximal gradient step on it starting from $y_t$. Hence, by the convergence rate of proximal gradient ascent algorithm under strong concavity, we conclude that with $\eta_{y}\le \frac{1}{L}$,
\begin{align}
	\|y_{t+1} - y^*(x_{t}) \|^2 \le \big(1-\kappa^{-1}\big) \|y_{t} - y^*(x_{t})\|^2. \label{eq: 8}
\end{align}
Hence, we further obtain that
\begin{align}
	\|y^*(x_{t+1}) - y_{t+1}\|^2 &\le \big(1+{\kappa^{-1}} \big)\|y_{t+1} - y^*(x_t) \|^2 + (1+ {\kappa} )\|y^*(x_{t+1}) - y^*(x_t) \|^2 \nonumber\\
	&\le \big(1-{\kappa^{-2}} \big)\|y_{t} - y^*(x_t) \|^2 + {\kappa^2(1+\kappa)}\|x_{t+1} - x_{t}\|^2. \label{eq: 11}
\end{align}

{Adding eqs. \eqref{eq: 3} \& \eqref{eq: 11}, we obtain}
\begin{align}
	&{\Phi(x_{t+1}) +g(x_{t+1})}\nonumber\\
	&{\le \Phi(x_{t})+g(x_t) - \Big(\frac{1}{2\eta_x} -\frac{L(1+\kappa)}{2} - \frac{L^2\kappa^2}{2} -\kappa^2(1+\kappa) \Big)\|x_{t+1}-x_t\|^2} \nonumber\\
	&\quad{+\Big(1-\frac{1}{2\kappa^2}\Big)\|y^*(x_t) - y_t\|^2 -\|y^*(x_{t+1}) - y_{t+1}\|^2} \nonumber
\end{align}

\noindent {Rearranging the equation above and recalling the definition of the Lyapunov function $H(z):= \Phi(x) + g(x) + \Big(1-\frac{1}{4\kappa^2}\Big)\|y-y^*(x)\|^2$, we have} \\
\begin{align}
	{H(z_{t+1})\le}&{H(z_t)- \Big(\frac{1}{2\eta_x} -\frac{L(1+\kappa)}{2} - \frac{L^2\kappa^2}{2} -\kappa^2(1+\kappa) \Big)\|x_{t+1}-x_t\|^2} \nonumber\\
	&{-\frac{1}{4\kappa^2}(\|y^*(x_t) - y_t\|^2+\|y^*(x_{t+1}) - y_{t+1}\|^2)} \label{eq: Hdec_tmp}
\end{align}
{When $\eta_x<\kappa^{-3}(L+3)^{-2}$, using $\kappa\ge 1$ yields that}
\begin{align}
	&{\frac{1}{2\eta_x} -\frac{L(1+\kappa)}{2} - \frac{L^2\kappa^2}{2} -\kappa^2(1+\kappa)}\nonumber\\
	&{\ge \frac{1}{2}\kappa^3(L+3)^2-\frac{L}{2}(2\kappa)\kappa^2-\frac{L^2\kappa^3}{2}-\kappa^2(2\kappa)}\nonumber\\
	&{=\frac{1}{2}\kappa^3[(L+3)^2-2L-L^2-4]}\nonumber\\
	&{=\frac{1}{2}\kappa^3(4L+5)>2} \label{eq: overA}
\end{align}
{As a result, eq. \eqref{eq: Hdec} can be concluded by substituting eq. \eqref{eq: overA} into eq. \eqref{eq: Hdec_tmp}. }

\end{proof}

\section{Proof of \Cref{coro: 1}} \label{sec_coro1proof}
\corolya*

\begin{proof}

To prove the first and third items of \Cref{coro: 1}, summing the inequality of  \Cref{prop: lyapunov} over $t=0,1,..., T-1$, we obtain that for all $T\ge 1$,
	\begin{align}
	&\sum_{t=0}^{T-1}{ \Big[2\|x_{t+1}-x_t\|^2+\frac{1}{4\kappa^2}(\|y_{t+1}-y^*(x_{t+1})\|^2+\|y_{t}-y^*(x_{t})\|^2)\Big]} \nonumber\\
	&\le H(z_0) - H(z_T)\nonumber\\
	&\le H(z_0) - \big[\Phi(x_T)+g(x_T) \big] \nonumber\\
	&\le H(z_0) - \inf_{x\in \mathbb{R}^m} \big(\Phi(x)+g(x) \big) < +\infty. \nonumber
	\end{align}
	Letting $T\to \infty$, we conclude that 
	\begin{align}
		\sum_{t=0}^{\infty} { \Big[2\|x_{t+1}-x_t\|^2+\frac{1}{4\kappa^2}(\|y_{t+1}-y^*(x_{t+1})\|^2+\|y_{t}-y^*(x_{t})\|^2)\Big]} < +\infty.\nonumber
	\end{align}
	Therefore, we must have $\lim_{t\to \infty} \|x_{t+1}-x_t\| = {\lim_{t\to \infty} \|y_{t} - y^*(x_t) \|=} 0$.
	
	To prove the second item, note that 
	\begin{align}
	\|y_{t+1} - y_t\| \le 	\|y_{t+1} - y^*(x_t)\| + \|y_{t} - y^*(x_t)\| \overset{\cref{eq: 8}}{\le} (\sqrt{1-\kappa^{-1}} + 1) \|y_{t} - y^*(x_t)\| \overset{t}{\to} 0. \nonumber
	\end{align}

\end{proof}
\section{Proof of \Cref{thm: 1}} \label{sec_thm1proof}
\thmconv*

\begin{proof}

We first prove some useful results on the Lyapunov function $H(z)$. By \Cref{assum: f} we know that $\Phi+g$ is bounded below and have compact sub-level sets, and we first show that $H(z)$ also satisfies these conditions. First, note that $H(z)=\Phi(x)+g(x)+{\Big(1-\frac{1}{4\kappa^2}\Big)}\|y-y^*(x)\|^2 \ge \Phi(x)+g(x)$. Taking infimum over $x,y$ on both sides we obtain that $\inf_{x,y} H(z)\ge \inf_{x}\Phi(x)+g(x)> -\infty$. This shows that $H(z)$ is bounded below. Second, consider the sub-level set $\mathcal{Z}_\alpha:=\{z=(x,y): H(z)\le \alpha \}$ for any $\alpha\in \mathbb{R}$. This set is equivalent to $\{(x,y): \Phi(x)+g(x) +{\Big(1-\frac{1}{4\kappa^2}\Big)}\|y - y^*(x) \|^2\le \alpha \}$. For any point $(x,y)\in \mathcal{Z}_\alpha$, the $x$ part is included in the compact set  $\{x: \Phi(x)+g(x)\le \alpha\}$. Therefore, the $x$ in this set must be compact. Also, the $y$ in this set should also be compact as it is inside the co-coercive function $\|y-y^*(x)\|^2$. Hence, we have shown that $H(z)$ is bounded below and have compact sub-level set.

We first show that $\{(\Phi+g)(x_t)\}_t$ has a finite limit. 
 We have shown in \Cref{prop: lyapunov} that $\{H(z_t)\}_t$ is monotonically decreasing. Since $H(z)$ is bounded below, we conclude that $\{H(z_t)\}_t$ has a finite limit $H^*>-\infty$, i.e., $\lim_{t\to \infty} (\Phi+g)(x_t) + {\Big(1-\frac{1}{4\kappa^2}\Big)} \|y_t - y^*(x_t) \|^2 = H^*$. 
 Moreover, since $\|y_t - y^*(x_t) \| \overset{t}{\to} 0$, we further conclude that $\lim_{t\to \infty} (\Phi+g)(x_t) = H^*$. 
 
 Next, we prove the second item. Since $\{H(z_t)\}_t$ is monotonically decreasing and $H(z)$ has compact sub-level set, we conclude that $\{x_t\}_t, \{y_t\}_t$ are bounded and hence have compact sets of limit points. Next, we derive a bound on the subdifferential. By the optimality condition of the proximal gradient update of $x_t$ \red{and the summation rule of subdifferential in Corollary 1.12.2 of \cite{kruger2003frechet}}, we have
\begin{align}
\zero \in \partial g(x_{t+1}) + \frac{1}{\eta_x} \big(x_{t+1} - x_t + \eta_{x} \nabla_1 f(x_t, y_t)\big). \nonumber
\end{align}
Then, we obtain that
\begin{align}
\frac{1}{\eta_x} \big(x_t - x_{t+1} \big) -  \nabla_1 f(x_t, y_t) + \nabla \Phi(x_{t+1})  \in \partial (\Phi+ g)(x_{t+1}), \label{eq: 10}
\end{align}
which further implies that
\begin{align}
\dist_{\partial (\Phi+ g)(x_{t+1})}(\zero) &\le \frac{1}{\eta_x} \|x_{t+1}-x_t\| + \| \nabla_1 f(x_t, y_t) - \nabla \Phi(x_{t+1})\| \nonumber\\
&= \frac{1}{\eta_x} \|x_{t+1}-x_t\| + \| \nabla_1 f(x_t, y_t) - \nabla_1 f(x_{t+1}, y^*(x_{t+1}))\| \nonumber\\
&\le \frac{1}{\eta_x} \|x_{t+1}-x_t\| + L(\|x_{t+1} - x_t\| + \|y^*(x_{t+1})-y_t\|) \nonumber\\
&\le \Big(\frac{1}{\eta_x} +L\Big)\|x_{t+1}-x_t\|  + L\big(\|y^*(x_{t+1})-y^*(x_t)\| + \|y^*(x_{t})-y_t\|\big) \nonumber\\
&\le \Big(\frac{1}{\eta_x} +L(1+\kappa)\Big)\|x_{t+1}-x_t\|  + L \|y^*(x_{t})-y_t\|.\nonumber
\end{align}
Since we have shown that $\|x_{t+1}-x_t\| \overset{t}{\to} 0, \|y^*(x_{t})-y_t\|\overset{t}{\to} 0$, we conclude from the above inequality that $\dist_{\partial (\Phi+ g)(x_{t})}(\zero) \overset{t}{\to} 0$. Therefore, we have shown that 
\red{\begin{align}
	&\frac{1}{\eta_x} \big(x_{t-1} - x_{t} \big) -  \nabla_1 f(x_{t-1}, y_{t-1}) + \nabla \Phi(x_{t}) \in \partial (\Phi+ g)(x_{t}), \nonumber\\
	&\text{and}~\frac{1}{\eta_x} \big(x_{t-1} - x_{t} \big) -  \nabla_1 f(x_{t-1}, y_{t-1}) + \nabla \Phi(x_{t}) \overset{t}{\to} \zero. \label{eq: 9}
\end{align}}

Now consider any limit point $x^*$ of $x_t$ so that $x_{t(j)} \overset{j}{\to} x^*$ along a subsequence. By the proximal update of $x_{t(j)}$, we have 
\begin{align}
	&g(x_{t(j)}) + \frac{1}{2\eta_{x}} \|x_{t(j)} -x_{t(j)-1} \|^2 + \inner{x_{t(j)} -x_{t(j)-1} }{\nabla_1 f(x_{t(j)-1}, y_{t(j)-1})}  \nonumber\\ 
	&\le g(x^*) + \frac{1}{2\eta_{x}} \|x^* -x_{t(j)-1} \|^2 + \inner{x^* -x_{t(j)-1} }{\nabla_1 f(x_{t(j)-1}, y_{t(j)-1})}. \nonumber
\end{align}
Taking limsup on both sides of the above inequality and noting that $\{x_t\}_t, \{y_t\}_t$ are bounded, $\nabla f$ is Lipschitz, $\|x_{t+1} - x_t\| \overset{t}{\to} 0$ and $x_{t(j)} \to  x^*$, we conclude that $\lim\sup_j g(x_{t(j)}) \le  g(x^*)$.
Since $g$ is lower-semicontinuous, we know that $\lim\inf_j g(x_{t(j)}) \ge  g(x^*)$. Combining these two inequalities yields that $\lim_j g(x_{t(j)}) = g(x^*)$. By continuity of $\Phi$, we further conclude that $\lim_j (\Phi+g)(x_{t(j)}) = (\Phi+g)(x^*)$. Since we have shown that the entire sequence $\{(\Phi+g)(x_t)\}_t$ converges to a certain finite limit $H^*$, we conclude that $(\Phi+g)(x^*) \equiv H^*$ for all the limit points $x^*$ of $\{ x_t\}_t$. 

Next, we prove the third item. To this end, we have shown that for every subsequence $x_{t(j)} \overset{j}{\to} x^*$, we have that $(\Phi+g)(x_{t(j)}) \overset{j}{\to} (\Phi+g)(x^*)$ and there exists \red{$u_t \in \partial (\Phi+ g)(x_{t})$ such that $u_t\overset{t}{\to} \zero$} (by \cref{eq: 9}). 
Recall the definition of limiting sub-differential, we conclude that every limit point $x^*$ of $\{x_t\}_t$ is a critical point of $(\Phi+g)(x)$, i.e., $\zero \in \partial (\Phi+ g)(x^*)$.

\end{proof}

\section{Proof of \Cref{thm: 2}} \label{sec_thm2proof}

\thmvarconv*

\begin{proof}

We first derive a bound on $\partial H(z)$. Recall that $H(z)= \Phi(x)+g(x)+{\Big(1-\frac{1}{4\kappa^2}\Big)}\|y - y^*(x) \|^2$, and that \red{$\|y^*(x)-y\|^2$ has non-empty subdifferential $\partial_x (\|y^*(x)-y\|^2)$}. We therefore have 
\begin{align}
	\partial_x H(z) &\red{\supset \partial(\Phi+ g)(x)} + \red{\Big(1-\frac{1}{4\kappa^2}\Big) \partial_x (\|y^*(x)-y\|^2)}, \nonumber\\
	\red{\nabla_y} H(z) &= {-\Big(2-\frac{1}{2\kappa^2}\Big)}\big(y^*(x)-y\big), \nonumber
\end{align}

\red{where the first inclusion follows from the scalar multiplication rule and sum rule of sub-differential, see Proposition 1.11 \& 1.12 of \cite{kruger2003frechet}. Next, we derive upper bounds on these sub-differentials. Based on \Cref{def:sub}, we can take any $u\in \widehat{\partial}_x (\|y^*(x)-y\|^2)$ and obtain that} 
\red{
\begin{align}
	0\le& \liminf_{z\neq x, z\to x} \frac{\|y^*(z)-y\|^2 - \|y^*(x)-y\|^2 - u^\intercal(z-x)}{\|z-x\|} \nonumber\\
	\le& \liminf_{z\neq x, z\to x} \frac{[y^*(z)-y^*(x)]^\intercal [y^*(z)+y^*(x)-2y]- u^\intercal(z-x)}{\|z-x\|}\nonumber\\
	\le& \liminf_{z\neq x, z\to x} \frac{\|y^*(z)-y^*(x)\| \|y^*(z)+y^*(x)-2y\|- u^\intercal(z-x)}{\|z-x\|} \nonumber\\
	\stackrel{(i)}{\le}& \liminf_{z\neq x, z\to x} \Big[\kappa\|y^*(z)+y^*(x)-2y\|-\frac{u^\intercal(z-x)}{\|z-x\|}\Big] \nonumber\\
	\stackrel{(ii)}{=}& 2\kappa\|y^*(x)-y\|-\limsup_{z\neq x, z\to x} \frac{u^\intercal(z-x)}{\|z-x\|} \nonumber\\
	\stackrel{(iii)}{=}& 2\kappa\|y^*(x)-y\|-\|u\|
\end{align}
where (i) and (ii) use the fact that $y^*$ is $\kappa$-Lipschitz based on \Cref{prop_Phiystar}, and the limsup in (iii) is achieved by letting $z=x+\sigma u$ with $\sigma\to 0^+$ in (ii). Hence, we conclude that $\|u\|\le 2\kappa \|y^*(x)-y\|$. Since $\partial_x (\|y^*(x)-y\|^2)$ is the graphical closure of $\widehat{\partial}_x (\|y^*(x)-y\|^2)$, we have that 
$$\dist_{\partial_x (\|y^*(x)-y\|^2)}(\zero) \le 2\kappa \|y^*(x)-y\|.$$ 

Then,} utilizing the characterization of $\partial (\Phi+g)(x)$ in \cref{eq: 10}, we obtain that
%\begin{align}
%	&\frac{1}{\eta_x} (x_t - x_{t+1}  ) -  \nabla_1 f(x_t, y_t) + \nabla \Phi(x_{t+1}) +2L(y^*(x_{t+1}))' (y^*(x_{t+1})-y_{t+1}) \in \partial_x H(x_{t+1},y_{t+1}) \nonumber\\
%	&2L(y^*(x_{t+1})-y_{t+1}) = \partial_y H(x_{t+1},y_{t+1}) \nonumber
%\end{align}
%Therefore, we conclude that 
\begin{align}
	&\dist_{\partial H(z_{t+1})}(\zero) \nonumber\\
	&\le \dist_{\partial_x H(z_{t+1})}(\zero) + \red{\|\nabla_y H(z_{t+1})\|}\nonumber\\	
	&\red{\le \dist_{\partial (\Phi+ g)(x_{t+1})}(\zero) + \Big(1-\frac{1}{4\kappa^2}\Big) \dist_{\partial_x (\|y^*(x_{t+1})-y_{t+1}\|^2)}(\zero)+ \Big(2-\frac{1}{2\kappa^2}\Big)\|y^*(x_{t+1})-y_{t+1}\| }\nonumber\\
	&\le \frac{1}{\eta_x} \|x_{t+1}-x_t\| + \| {\nabla_1 f}(x_t, y_t) - \nabla \Phi(x_{t+1})\| + \Big(2-\frac{1}{2\kappa^2}\Big)\big(1+ \kappa\big)\|y^*(x_{t+1})-y_{t+1}\|\nonumber\\
	&\overset{(i)}{\le} \Big(\frac{1}{\eta_x} +L\Big) \|x_{t+1}-x_t\| + L \|y^*(x_{t+1})-y_{t}\|+ {2}\big(1+  {\kappa}\big)\|y^*(x_{t+1})-y_{t+1}\| \nonumber\\
	&\overset{(ii)}{\le} \Big(\frac{1}{\eta_x} +L(1+\kappa)\Big) \|x_{t+1}-x_t\| + L\|y^*(x_{t})-y_{t}\| \nonumber\\
	&\qquad+ {2}\big(1+  {\kappa}\big) \Big[{{\sqrt{1-\kappa^{-2}}}}\|y^*(x_{t})-y_{t}\| + {\kappa\sqrt{(1+\kappa)}} \|x_{t+1} - x_t\|\Big]\nonumber\\
	&{\overset{(iii)}{\le} \Big(\frac{1}{\eta_x} +(L+4\kappa^2)(1+\kappa)\Big)} \|x_{t+1}-x_t\|+  {(L+4\kappa)}\|y^*(x_{t})-y_{t}\|.\label{eq: 12}
\end{align}

where \red{(i) uses \Cref{prop_Phiystar} that $\nabla\Phi(x_{t+1})=\nabla_1 f(x_{t+1},y^*(x_{t+1}))$ and that $y^*$ is $\kappa$-Lipschitz}, (ii) uses \cref{eq: 11} and the inequality that $\sqrt{a+b}\le\sqrt{a}+\sqrt{b}~(a,b\ge 0)$ and (iii) uses $\kappa\ge 1$.

Next, we prove the convergence of the sequence under the assumption that $H(z)$ is a \KL function. 
Recall that we have shown in the proof of \Cref{thm: 1} that: 1) $\{H(z_t)\}_t$ decreases monotonically to the finite limit $H^*$; 2) for any limit point $x^*, y^*$ of $\{x_t\}_t, \{y_t\}_t$, $H(x^*,y^*)$ has the constant value $H^*$. Hence, the \KL inequality (see \Cref{def: KL}) holds after sufficiently large number of iterations, i.e., there exists $t_0\in \mathbb{N}^+$ such that for all $t\ge t_0$,
\begin{align}
	\varphi'(H(z_t) -H^*) \dist_{\partial H(z_t)}(\zero)\ge 1. \nonumber
\end{align}
Rearranging the above inequality and utilizing \cref{eq: 12}, we obtain that for all $t\ge t_0$,
\begin{align}
	&\varphi'(H(z_t) -H^*) \nonumber\\
	&\ge \frac{1}{\dist_{\partial H(z_t)}(\zero)} \nonumber\\
	&\ge \Big[\Big(\frac{1}{\eta_x} +(L+4\kappa^2)(1+\kappa)\Big) \|x_{t}-x_{t-1}\|+  (L+4\kappa)\|y^*(x_{t-1})-y_{t-1}\|\Big]^{-1}\label{eq: 13}
\end{align}
By concavity of the function $\varphi$ (see \Cref{def: KL}), we know that 
\begin{align}
	&\varphi(H(z_t) -H^*) - \varphi(H(z_{t+1}) -H^*) \nonumber\\
	&\ge \varphi'(H(z_t) -H^*) (H(z_t) - H(z_{t+1})) \nonumber\\
	&\stackrel{(i)}{\ge} \frac{\|x_{t+1}-x_t\|^2+\frac{1}{4\kappa^2}\|y_{t}-y^*(x_{t})\|^2}{\Big(\frac{1}{\eta_x} +(L+4\kappa^2)(1+\kappa)\Big) \|x_{t}-x_{t-1}\|+  (L+4\kappa)\|y^*(x_{t-1})-y_{t-1}\|} \label{eq: phidiff}\\
	&\stackrel{(ii)}{\ge} \frac{\frac{1}{2}\Big[\|x_{t+1}-x_t\|+\frac{1}{2\kappa}\|y_{t}-y^*(x_{t})\|\Big]^2}{\Big(\frac{1}{\eta_x} +(L+4\kappa^2)(1+\kappa)\Big) \|x_{t}-x_{t-1}\|+  (L+4\kappa)\|y^*(x_{t-1})-y_{t-1}\|}, \nonumber
\end{align}
{where (i) uses \Cref{prop: lyapunov} and \cref{eq: 13}, (ii) uses the inequality that $a^2+b^2\ge \frac{1}{2}(a+b)^2$. }
%where we have denoted all coefficients as some constants $C_1,C_2,C_3$ for simplicity, and we preserve the sufficiently large factor $\frac{1}{\eta_{x}}$ (since $\eta_{x}$ is chosen to be sufficiently small). Throughout the rest of the analysis, the values of these constants may vary from line to line, but they do not affect the proof logic.

{Rearranging the above inequality that}
\begin{align}
	&{\Big[\|x_{t+1}-x_t\|+\frac{1}{2\kappa}\|y_{t}-y^*(x_{t})\|\Big]^2}\nonumber\\
	&{\le 2[\varphi(H(z_t) -H^*) - \varphi(H(z_{t+1}) -H^*)]}\nonumber\\
	&{\quad\Big[\Big(\frac{1}{\eta_x} +(L+4\kappa^2)(1+\kappa)\Big) \|x_{t}-x_{t-1}\|+  (L+4\kappa)\|y^*(x_{t-1})-y_{t-1}\|\Big]}\nonumber\\
	&{\le \Big[C[\varphi(H(z_t) -H^*) - \varphi(H(z_{t+1}) -H^*)]} \nonumber\\
	&{\quad +\frac{1}{C}\Big(\frac{1}{\eta_x} +(L+4\kappa^2)(1+\kappa)\Big) \|x_{t}-x_{t-1}\|+  \frac{1}{C} (L+4\kappa)\|y^*(x_{t-1})-y_{t-1}\|\Big]^2}\nonumber
\end{align}
where the final step uses the inequality that $2ab\le(Ca+\frac{b}{C})^2$ for any $a,b\ge 0$ and $C>0$ (the value of $C$ will be assigned later). Taking square root of both sides of the above inequality and telescoping over {$t=t_0,\ldots,T-1$}, we obtain that
\begin{align}
	&{\sum_{t=t_0}^{T-1}\|x_{t+1}-x_t\|+\frac{1}{2\kappa}\sum_{t=t_0}^{T-1}\|y_{t}-y^*(x_{t})\|}\nonumber\\
	&{\le C\varphi[H(z_{t_0})-H^*]-C\varphi[H(z_T)-H^*] +\frac{1}{C}\Big(\frac{1}{\eta_x} +(L+4\kappa^2)(1+\kappa)\Big) \sum_{t=t_0}^{T-1}\|x_{t}-x_{t-1}\|}\nonumber\\
	&{\quad +\frac{1}{C}(L + 4\kappa)\sum_{t=t_0}^{T-1}\|y^*(x_{t-1})-y_{t-1}\|}\nonumber\\
	&{\le\frac{Cc}{\theta}[H(z_{t_0})-H^*]^{\theta} +\frac{1}{C}\Big(\frac{1}{\eta_x} +(L+4\kappa^2)(1+\kappa)\Big) \sum_{t=t_0-1}^{T-2}\|x_{t+1}-x_{t}\|}\nonumber\\
	&{\quad +\frac{1}{C}(L +4\kappa)\sum_{t=t_0-1}^{T-2}\|y^*(x_{t})-y_{t}\|}\nonumber
\end{align}
{where the final steps uses $\varphi(s) = \frac{c}{\theta} s^\theta$ and the fact that $H(z_T)-H^*\ge 0$. Since the value of $C>0$ is arbitrary, we can select large enough $C$ such that $\frac{1}{C}\Big(\frac{1}{\eta_x} +(L+4\kappa)^2(1+\kappa)\Big)<\frac{1}{2}$ and $\frac{1}{C}(L+4\kappa)<\frac{1}{2\kappa}$. Hence, the inequality above further implies that}
\begin{align}
	&{\frac{1}{2}\sum_{t=t_0}^{T-1}\|x_{t+1}-x_t\|\le\frac{Cc}{\theta}[H(z_{t_0})-H^*]^{\theta} +\frac{1}{2} \|x_{t_0}-x_{t_0-1}\| + \frac{1}{2\kappa}\|y^*(x_{t_0-1})-y_{t_0-1}\|<+\infty}.\nonumber
\end{align}
Letting $T\to\infty$, we conclude that
$$\sum_{t=1}^{\infty}\|x_{t+1} - x_t\| {<} +\infty.$$ 
Moreover, this implies that $\{x_t\}_t$ is a Cauchy sequence and therefore converges to a certain limit, i.e., $x_t \overset{t}{\to} x^*$. We have shown in \Cref{thm: 1} that any such limit point must be a critical point of $\Phi+g$. Hence, we conclude that $\{x_t\}_t$ converges to a certain critical point $x^*$ of $(\Phi+g)(x)$. Also, note that $\|y^*(x_{t})-y_{t}\| \overset{t}{\to} 0$, $x_t\overset{t}{\to} x^*$ and $y^*$ is a Lipschitz mapping, {so} we conclude that $\{y_t\}_t$ converges to $y^*(x^*)$. 

\end{proof}
\section{Proof of \Cref{thm: 3}} \label{sec_thm3proof}

\thmconvrate*

\begin{proof}

{Note that \cref{eq: 12} implies that}
\begin{align}
{\dist_{\partial H(z_{t+1})}(\zero)^2 \le}& {2\Big(\frac{1}{\eta_x} +(L+4\kappa^2)(1+\kappa)\Big)^2 \|x_{t+1}-x_t\|^2+ 2(L+4\kappa)^2\|y^*(x_{t})-y_{t}\|^2},\label{eq: dist2}
\end{align}

Recall that we have shown that for all $t\ge t_0$, the \KL property holds and we have
\begin{align}
\big[ \varphi'(H(z_t) - H^*)\big]^2 \dist_{\partial H(z_t)}^2(\zero) \ge 1. \nonumber
\end{align}
Throughout the rest of the proof, we assume $t\ge t_0$.
Substituting \cref{eq: dist2} into the above bound yields that
\begin{align}
1\le&{2}\big[\varphi'(H(z_t) - H^*)\big]^2 {\Big[ \Big(\frac{1}{\eta_x} +(L+4\kappa^2)(1+\kappa)\Big)^2\|x_{t}-x_{t-1}\|^2} \nonumber\\
&{+(L+4\kappa)^2\|y^*(x_{t-1})-y_{t-1}\|^2 \Big]}.\nonumber\\
\le& {2M\big[\varphi'(H(z_t) - H^*)\big]^2\Big[2\|x_{t}-x_{t-1}\|^2+\frac{1}{4\kappa^2}\|y^*(x_{t-1})-y_{t-1}\|^2\Big]} \label{eq: 18}
\end{align}
{where the second inequality uses the definition of $M$ in \cref{eq: M}.}

Substituting {\cref{eq: Hdec} and $\varphi'(s)=cs^{\theta-1}$ ($c>0$)} into eq. \eqref{eq: 18} and rearranging, we further obtain that
\begin{align}
{\big[c(H(z_t) - H^*)^{\theta-1}\big]^{-2} \le 2M[H(z_{t-1})-H(z_{t})]} \nonumber
\end{align}
Defining $d_t=H(z_t) - H^*$, the above inequality further becomes 
\begin{align}
	d_{t-1} -  d_t \ge \frac{1}{2Mc^2} d_t^{2(1-\theta)}. \label{eq: 16}
\end{align}

Next, we prove the convergence rates case by case. 

(Case 1) If $\theta = 1$, then \cref{eq: 16} implies that $d_{t-1} - d_t \ge \frac{1}{2Mc^2}>0$ whenever $d_t>0$. Hence, $d_t$ achieves 0 (i.e., $H(z_t)$ achieves $H^*$) within finite number of iterations.

(Case 2) If $\theta\in (\frac{1}{2}, 1)$, since $d_t\ge 0$, eq. \eqref{eq: 16} implies that
\begin{align}
	d_{t-1} \ge \frac{1}{2Mc^2}	d_t^{2(1-\theta)},
\end{align}
which is equivalent to that
\begin{align}
	(2Mc^2)^{\frac{1}{2\theta-1}}d_t \le \Big[(2Mc^2)^{\frac{1}{2\theta-1}}d_{t-1}\Big]^{\frac{1}{2(1-\theta)}} \label{eq: HdecSuper}
\end{align}
Since $d_t\downarrow 0$, $(2Mc^2)^{\frac{1}{2\theta-1}}d_{t_1}\le e^{-1}$ for sufficiently large $t_1\in \mathbb{N}^+$ and $t_1\ge t_0$. Hence, eq. \eqref{eq: HdecSuper} implies that for $t\ge t_1$
\begin{align}
	(2Mc^2)^{\frac{1}{2\theta-1}}d_t\le& \Big[(2Mc^2)^{\frac{1}{2\theta-1}}d_{t_1}\Big]^{\big[\frac{1}{2(1-\theta)}\big]^{t-t_1}}\nonumber\\
	\le& \exp\Big\{-\Big[\frac{1}{2(1-\theta)}\Big]^{t-t_1}\Big\}.\nonumber
\end{align}
Note that $\theta\in (\frac{1}{2}, 1)$ implies that $\frac{1}{2(1-\theta)}>1$, and thus the inequality above implies that $H(z_t) \downarrow H^*$ at the super-linear rate given by eq. \eqref{eq: HsuperConverge}. 

(Case 3) If $\theta=\frac{1}{2}$, 
\begin{align}
d_{t-1} -  d_t \ge {\frac{1}{2Mc^2}} d_t,\label{eq: HdecLinear}
\end{align}
which implies that $d_t \le \Big(1+{\frac{1}{2Mc^2}}\Big)^{-1} d_{t-1}$. Therefore, $d_t\downarrow 0$ (i.e., $H(z_t) \downarrow H^*$) at the linear rate {given by eq. \eqref{eq: HlinearConverge}}. 

(Case 4) If $\theta\in (0, \frac{1}{2})$, consider the following two subcases.

If $d_{t-1}\le 2d_t$, denote $\psi(s)=\frac{1}{1-2\theta}s^{-(1-2\theta)}$, then 
\begin{align}
	\psi(d_t)-\psi(d_{t-1})=&\int_{d_{t}}^{d_{t-1}} -\psi'(s)ds = \int_{d_{t}}^{d_{t-1}} s^{-2(1-\theta)}ds \stackrel{(i)}{\ge} d_{t-1}^{-2(1-\theta)} (d_{t-1}-d_t) \nonumber\\
	\stackrel{(ii)}{\ge}& \frac{1}{2Mc^2} \Big(\frac{d_t}{d_{t-1}}\Big)^{2(1-\theta)} \ge \frac{1}{2^{3-2\theta} Mc^2} \ge \frac{1}{8Mc^2}
\end{align}
where (i) uses $d_t\le d_{t-1}$ and $-2(1-\theta)<-1$, and (ii) uses eq. \eqref{eq: 16}.

If $d_{t-1}>2d_t$
\begin{align}
	\psi(d_t)-\psi(d_{t-1})=&\frac{1}{1-2\theta}(d_t^{-(1-2\theta)}-d_{t-1}^{-(1-2\theta)}) \ge \frac{1}{1-2\theta}(d_t^{-(1-2\theta)}-(2d_t)^{-(1-2\theta)})\nonumber\\
	\ge& \frac{1-2^{-(1-2\theta)}}{1-2\theta} d_t^{-(1-2\theta)} \ge \frac{1-2^{-(1-2\theta)}}{1-2\theta} d_{t_0}^{-(1-2\theta)}. 
\end{align}
where we use $-(1-2\theta)<0$, $d_{t-1}>2d_t$ and $d_t\le d_{t_0}$. 

Hence, 
\begin{align}
	\psi(d_t)-\psi(d_{t-1}) \ge& \min\Big[ \frac{1}{8Mc^2}, \frac{1-2^{-(1-2\theta)}}{1-2\theta} d_{t_0}^{-(1-2\theta)} \Big]=\frac{C}{1-2\theta}>0,
\end{align}
which implies that
\begin{align}
	\psi(d_t) \ge \psi(d_{t_0})+\frac{C}{1-2\theta}(t-t_0) \ge \frac{C}{1-2\theta}(t-t_0) \nonumber
\end{align}
By substituing the definition of $\psi$, the inequality above implies that $H(z_t)\downarrow H^*$ in a sub-linear rate given by eq. \eqref{eq: HsubConverge}. 
\end{proof}

%The proof of the convergence rate under this recursion can be found in ??? {which paper??}.
\section{Proof of \Cref{thm: 4}} \label{sec_thm4proof}

\thmptconvrate*

\begin{proof}
(Case 1) If $\theta=1$, then based on the first case of \Cref{sec_thm3proof}, $H(z_t)\equiv H^*$ after finite number of iterations. Hence, for large enough $t$, \Cref{prop: lyapunov} yields that
\begin{align}
	2\|x_{t+1}-x_t\|^2+\frac{1}{4\kappa^2}\big(\|y_{t+1}-y^*(x_{t+1})\|^2+\|y_{t}-y^*(x_{t})\|^2\big) \le H(z_t) - H(z_{t+1}) =0,
\end{align}
which implies that $x_{t+1}=x_t$ and $y_t=y^*(x_t)$ for large enougth $t$. Hence, $x_t\to x^*$ and $y_t\to y^*(x^*)$ within finite number of iterations. 

(Case 2)  If $\theta\in (\frac{1}{2}, 1)$, denote $A_t=\|x_{t+1}-x_t\|+\frac{1}{2\kappa}\|y_{t}-y^*(x_{t})\|$. Then, based on the definition of $M$ in \cref{eq: M}, we have
\begin{align}
	\Big(\frac{1}{\eta_x} +(L+4\kappa^2)(1+\kappa)\Big) \|x_{t}-x_{t-1}\|+  (L+4\kappa)\|y^*(x_{t-1})-y_{t-1}\|\le \sqrt{2M}A_{t-1}. \label{eq: Alarger}
\end{align}
Hence, eqs. \eqref{eq: 13} \& \eqref{eq: Alarger} and $\varphi'(s)=cs^{\theta-1}$ imply that 
\begin{align}
	&c(H(z_t) -H^*)^{\theta-1} \ge (\sqrt{2M}A_{t-1})^{-1}, \nonumber
\end{align}
which along with $\theta-1<0$ implies
\begin{align}
	&H(z_t) -H^* \le (c\sqrt{2M}A_{t-1})^{\frac{1}{1-\theta}} \label{eq: HleA}.
\end{align}

Then, eqs. \eqref{eq: phidiff} \& \eqref{eq: Alarger} imply that
\begin{align}
&\varphi(H(z_t) -H^*) - \varphi(H(z_{t+1}) -H^*) \ge  \frac{\|x_{t+1}-x_t\|^2+\frac{1}{4\kappa^2}\|y_{t}-y^*(x_{t})\|^2}{2\sqrt{2M}A_{t-1}}.\nonumber
\end{align}

Using the inequality that $a^2+b^2\ge \frac{1}{2}(a+b)^2$ and recalling the definition of $A_t$ and $\varphi(s)=\frac{c}{\theta} s^{\theta}$, the above  inequality further implies that
\begin{align}
	&\frac{c}{\theta}(H(z_t) -H^*)^{\theta} - \frac{c}{\theta}(H(z_{t+1}) -H^*)^{\theta} \ge  \frac{A_t^2}{4\sqrt{2M}A_{t-1}}. \label{eq: Aratio}
\end{align}

Substituting \cref{eq: HleA} into \cref{eq: Aratio} and using $H(z_{t+1}) -H^*\ge 0$ yield that
\begin{align}
	A_t^2\le \frac{4}{\theta}(c\sqrt{2M}A_{t-1})^{\frac{1}{1-\theta}},\nonumber
\end{align}
which is equivalent to that
\begin{align}
	C_1A_t \le (C_1A_{t-1})^{\frac{1}{2(1-\theta)}},\label{eq: Aiter}
\end{align}
where
\begin{align}
	C_1=(4/\theta)^{\frac{1-\theta}{2\theta-1}}(c\sqrt{2M})^{\frac{1}{2\theta-1}}.\nonumber
\end{align}
Note that \cref{eq: Aiter} holds for $t\ge t_0$. Since $A_t\to 0$, there exists $t_1\ge t_0$ such that $C_1A_{t_1}\le e^{-1}$. Hence, by iterating \cref{eq: Aiter} from $t=t_1+1$, we obtain
\begin{align}
	C_1A_t\le \exp\Big[-\Big(\frac{1}{2(1-\theta)}\Big)^{t-t_1}\Big], \quad \forall t\ge t_1+1.\nonumber
\end{align}
Hence, for any $t\ge t_1+1$, 
\begin{align}
	\sum_{s=t}^{\infty} A_s\le& \frac{1}{C_1} \sum_{s=t}^{\infty} \exp\Big[-\Big(\frac{1}{2(1-\theta)}\Big)^{s-t_1}\Big]\nonumber\\
	=& \frac{1}{C_1} \exp\Big[-\Big(\frac{1}{2(1-\theta)}\Big)^{t-t_1}\Big] \sum_{s=t}^{\infty} \exp\Big[\Big(\frac{1}{2(1-\theta)}\Big)^{t-t_1}-\Big(\frac{1}{2(1-\theta)}\Big)^{s-t_1}\Big]\nonumber\\
	=& \frac{1}{C_1} \exp\Big[-\Big(\frac{1}{2(1-\theta)}\Big)^{t-t_1}\Big] \sum_{s=t}^{\infty} \exp\Big\{\Big(\frac{1}{2(1-\theta)}\Big)^{t-t_1}\Big[1-\Big(\frac{1}{2(1-\theta)}\Big)^{s-t}\Big]\Big\}\nonumber\\
	\stackrel{(i)}{\le}& \frac{1}{C_1} \exp\Big[-\Big(\frac{1}{2(1-\theta)}\Big)^{t-t_1}\Big] \sum_{s=t}^{\infty} \exp\Big[1-\Big(\frac{1}{2(1-\theta)}\Big)^{s-t}\Big]\nonumber\\
	=& \frac{1}{C_1} \exp\Big[-\Big(\frac{1}{2(1-\theta)}\Big)^{t-t_1}\Big] \sum_{s=0}^{\infty} \exp\Big[1-\Big(\frac{1}{2(1-\theta)}\Big)^{s}\Big]\nonumber\\
	\stackrel{(ii)}{\le}&\mathcal{O}\Big\{\exp\Big[-\Big(\frac{1}{2(1-\theta)}\Big)^{t-t_1}\Big]\Big\},\label{eq: sumA}
\end{align}
where (i) uses the inequalities that $\frac{1}{2(1-\theta)}>1$ and that $s\ge t\ge t_1+1$, and (ii) uses the fact that $\sum_{s=0}^{\infty} \exp\Big[1-\Big(\frac{1}{2(1-\theta)}\Big)^{s}\Big]<+\infty$ is a positive constant independent from $t$. Therefore, the convergence rate \eqref{eq: xySuperConverge} can be directly derived as follows
\begin{align}
	\|x_t-x^*\|=& \mathop {\lim \sup}\limits_{T \to \infty } \|x_t-x_{T}\| \le \mathop {\lim  \sup}\limits_{T \to \infty } \sum_{s=t}^{T-1} \|x_{s+1}-x_s\| \nonumber\\
	\le& \mathop {\lim \sup}\limits_{T \to \infty } \sum_{s=t}^{T-1} A_s \le \mathcal{O}\Big\{\exp\Big[-\Big(\frac{1}{2(1-\theta)}\Big)^{t-t_1}\Big]\Big\},\label{eq: x_superlinear_derive}
\end{align}
and
\begin{align}
	\|y_t-y^*(x^*)\|\le& \|y_t-y^*(x_t)\|+\|y^*(x_t)-y^*(x^*)\| \stackrel{(i)}{\le} 2\kappa A_t+\kappa\|x_t-x^*\| \nonumber\\
	\le& 2\kappa \sum_{s=t}^{\infty} A_s+\kappa\|x_t-x^*\| \stackrel{(ii)}{\le} \mathcal{O}\Big\{\exp\Big[-\Big(\frac{1}{2(1-\theta)}\Big)^{t-t_1}\Big]\Big\}, \nonumber %\label{y_superlinear_derive}
\end{align}
where (i) uses the Lipschitz property of $y^*$ in \Cref{prop_Phiystar}, and (ii) uses eqs. \eqref{eq: sumA} \& \eqref{eq: x_superlinear_derive}. 

(Case 3 \& 4) Notice that \cref{eq: Aratio} still holds if $\theta\in\big(0, \frac{1}{2}\big]$. Hence, if $A_t\ge \frac{1}{2}A_{t-1}$, then \cref{eq: Aratio} implies that
\begin{align}
A_t\le \frac{8c\sqrt{2M}}{\theta}\big[(H(z_t) -H^*)^{\theta}-(H(z_{t+1}) -H^*)^{\theta}\big].\nonumber
\end{align}

Otherwise, $A_t\le \frac{1}{2}A_{t-1}$. Combining these two inequalities yields that
\begin{align}
	A_t\le \frac{8c\sqrt{2M}}{\theta}\big[(H(z_t) -H^*)^{\theta}-(H(z_{t+1}) -H^*)^{\theta}\big] + \frac{1}{2}A_{t-1}. \nonumber
\end{align}
Notice that the inequality above holds whenever $t\ge t_0$. Hence, telescoping the inequality above yields
\begin{align}
	\sum_{s=t}^{T} A_s\le \frac{8c\sqrt{2M}}{\theta}\big[(H(z_{t}) -H^*)^{\theta}-(H(z_{T+1}) -H^*)^{\theta}\big] + \frac{1}{2} \sum_{s=t-1}^{T-1} A_{s}, \quad\forall t\ge t_0, \label{eq: Asum_next}
\end{align}
which along with $A_T\ge 0$, $H(z_{T+1}) -H^*\ge 0$ implies that
\begin{align}
	\frac{1}{2}\sum_{s=t}^{T} A_s\le \frac{8c\sqrt{2M}}{\theta}(H(z_{t}) -H^*)^{\theta} + \frac{1}{2} A_{t-1}, \nonumber
\end{align}
Letting $t=t_0$ and $T\to\infty$ in the above inequality yields that $\sum_{s=t_0}^{\infty} A_s<+\infty$. Hence, by letting $T\to\infty$ and denoting $S_t=\sum_{s=t}^{\infty} A_s$ in \cref{eq: Asum_next}, we obtain that
\begin{align}
	S_t\le \frac{8c\sqrt{2M}}{\theta}(H(z_{t}) -H^*)^{\theta} + \frac{1}{2} S_{t-1}, \quad\forall t\ge t_0, \nonumber%\label{eq: Asum_next2}
\end{align}
which further implies that
\begin{align}
	S_t\le \frac{1}{2^{t-t_0}} S_{t_0}+\frac{8c\sqrt{2M}}{\theta} \sum_{s=t_0+1}^{t} \frac{1}{2^{t-s}} (H(z_{s}) -H^*)^{\theta} \label{eq: Siter}
\end{align}

(Case 3) If $\theta=1/2$, \cref{eq: HlinearConverge} holds. Substituting \cref{eq: HlinearConverge} and $\theta=1/2$ into \cref{eq: Siter} yields that
\begin{align}
	S_t\le& \frac{1}{2^{t-t_0}} S_{t_0}+8c \sqrt{2M[H(z_{t_0})-H^*]} \sum_{s=t_0+1}^{t} \frac{1}{2^{t-s}} \Big(1+\frac{1}{2Mc^2}\Big)^{(t_0-s)/2}\nonumber\\
	\le& \frac{1}{2^{t-t_0}} S_{t_0}+\frac{C_2}{2^t} \sum_{s=t_0+1}^{t} \Big(\frac{1}{4}+\frac{1}{8Mc^2}\Big)^{-s/2} \label{eq: Srate}
\end{align}
where
\begin{align}
	C_2=8c \sqrt{2M[H(z_{t_0})-H^*]} \Big(1+\frac{1}{2Mc^2}\Big)^{t_0/2}
\end{align}
is a positive constant independent of $t$. 

Notice that when $\frac{1}{4}+\frac{1}{8Mc^2}\ge 1$, 
\begin{align}
	\sum_{s=t_0+1}^{t} \Big(\frac{1}{4}+\frac{1}{8Mc^2}\Big)^{-s/2}\le t-t_0\nonumber %\le \mathcal{O}(1.2^t) \nonumber
\end{align}
and when $\frac{1}{4}+\frac{1}{8Mc^2}<1$, 
\begin{align}
	\sum_{s=t_0+1}^{t} \Big(\frac{1}{4}+\frac{1}{8Mc^2}\Big)^{-s/2}=& \Big(\frac{1}{4}+\frac{1}{8Mc^2}\Big)^{-t/2} \frac{1-\Big(\frac{1}{4}+\frac{1}{8Mc^2}\Big)^{(t-t_0)/2}}{1-\Big(\frac{1}{4}+\frac{1}{8Mc^2}\Big)^{1/2}} \le \mathcal{O}\Big[\Big(\frac{1}{4}+\frac{1}{8Mc^2}\Big)^{-t/2}\Big] \nonumber
\end{align}
Since either of the two above inequalities holds, combining them yields that
\begin{align}
	\sum_{s=t_0+1}^{t} \Big(\frac{1}{4}+\frac{1}{8Mc^2}\Big)^{-s/2} \le \mathcal{O}\Big\{\max\Big[t-t_0, \Big(\frac{1}{4}+\frac{1}{8Mc^2}\Big)^{-t/2}\Big]\Big\} \nonumber
\end{align}
Substituing the above inequality into \cref{eq: Srate} yields that
\begin{align}
	S_t	\le& \frac{1}{2^{t-t_0}} S_{t_0}+\mathcal{O}\Big\{\max\Big[2^{-t}(t-t_0), \Big(1+\frac{1}{2Mc^2}\Big)^{-t/2}\Big]\Big\}\nonumber\\
	\le& \mathcal{O}\Big\{\Big[\min\Big(2, 1+\frac{1}{2Mc^2}\Big)\Big]^{-t/2}\Big\}.\nonumber
\end{align} 
Hence, 
\begin{align}
	\|x_t-x^*\|\stackrel{(i)}{\le} \sum_{s=t}^{\infty} A_s=S_t \le  \mathcal{O}\Big\{\Big[\min\Big(2, 1+\frac{1}{2Mc^2}\Big)\Big]^{-t/2}\Big\},\nonumber
\end{align}
where (i) comes from \cref{eq: x_superlinear_derive}. Then,
\begin{align}
	\|y_t-y^*(x^*)\|\le& \|y_t-y^*(x_t)\|+\|y^*(x_t)-y^*(x^*)\| \le 2\kappa A_t+\kappa\|x_t-x^*\|\nonumber\\
	\le&2\kappa S_t+\kappa\|x_t-x^*\| \le \mathcal{O}\Big\{\Big[\min\Big(2, 1+\frac{1}{2Mc^2}\Big)\Big]^{-t/2}\Big\}.\nonumber
\end{align}
The two above inequalities yield the linear convergence rate \eqref{eq: xylinearConverge}. 

(Case 4) If $\theta\in(0,\frac{1}{2})$, then \cref{eq: HsubConverge} holds. Substituting \cref{eq: HsubConverge} into \cref{eq: Siter} yields that for some constant $C_3>0$, 
\begin{align}
	S_t\le& \frac{1}{2^{t-t_0}} S_{t_0}+\frac{8c\sqrt{2M}}{\theta} \sum_{s=t_0+1}^{t} \frac{C_3}{2^{t-s}} (s-t_0)^{-\frac{\theta}{1-2\theta}} \nonumber\\
	\le& \frac{1}{2^{t-t_0}} S_{t_0}+\frac{8cC_3\sqrt{2M}}{2^{t-t_0} \theta} \sum_{s=1}^{t-t_0} 2^s s^{-\frac{\theta}{1-2\theta}} \nonumber\\
	\stackrel{(i)}{=}& \frac{1}{2^{t-t_0}} S_{t_0}+\frac{8cC_3\sqrt{2M}}{2^{t-t_0} \theta} \sum_{s=1}^{t_1} 2^s s^{-\frac{\theta}{1-2\theta}} +\frac{8cC_3\sqrt{2M}}{2^{t-t_0} \theta} \sum_{s=t_1+1}^{t-t_0} 2^s s^{-\frac{\theta}{1-2\theta}} \nonumber\\
	\le& \frac{1}{2^{t-t_0}} S_{t_0}+\frac{8cC_3\sqrt{2M}}{2^{t-t_0} \theta} \sum_{s=1}^{t_1} 2^s +\frac{8cC_3\sqrt{2M}}{2^{t-t_0} \theta} \sum_{s=t_1+1}^{t-t_0} 2^s \Big(\frac{t-t_0}{2}\Big)^{-\frac{\theta}{1-2\theta}} \nonumber\\
	\stackrel{(ii)}{\le}& \frac{1}{2^{t-t_0}} S_{t_0}+\frac{8cC_3\sqrt{2M}}{2^{t-t_0} \theta} 2^{t_1+1} + \frac{8cC_3\sqrt{2M}}{2^{t-t_0} \theta} \Big(\frac{t-t_0}{2}\Big)^{-\frac{\theta}{1-2\theta}} 2^{t-t_0+1}\nonumber\\
	=& \mathcal{O}\Big[\frac{1}{2^{t-t_0}}+\frac{1}{2^{(t-t_0)/2}}+(t-t_0)^{-\frac{\theta}{1-2\theta}}\Big]=\mathcal{O}\Big[(t-t_0)^{-\frac{\theta}{1-2\theta}}\Big],
	%\label{eq: Siter2}
\end{align}
where (i) denotes $t_1=\lfloor (t-t_0)/2 \rfloor$, (ii) uses the inequality that $\sum_{s=t_1+1}^{t-t_0} 2^s < \sum_{s=0}^{t-t_0} 2^s < 2^{t-t_0+1}$. Therefore, the sub-linear convergence rate \cref{eq: xysubConverge} follows from the following inequalities.
\begin{align}
\|x_t-x^*\|\le S_t \le \mathcal{O}\Big[(t-t_0)^{-\frac{\theta}{1-2\theta}}\Big],\nonumber
\end{align}
and
\begin{align}
\|y_t-y^*(x^*)\|\le& \|y_t-y^*(x_t)\|+\|y^*(x_t)-y^*(x^*)\| \le 2\kappa A_t+\kappa\|x_t-x^*\|\nonumber\\
\le&2\kappa S_t+\kappa\|x_t-x^*\| \le  \mathcal{O}\Big[(t-t_0)^{-\frac{\theta}{1-2\theta}}\Big].\nonumber
\end{align}

\end{proof}

\end{document}